\theoremstyle{plain}
\newtheorem{theorem}{Theorem}[section]
\newtheorem{proposition}[theorem]{Proposition}
\newtheorem{lemma}[theorem]{Lemma}
\newtheorem{corollary}[theorem]{Corollary}
\newtheorem{claim}[theorem]{Claim}
\newtheorem{conjecture}[theorem]{Conjecture}
\newtheorem{definition}[theorem]{Definition}
\newtheorem{remark}[theorem]{Remark}
\newtheorem{example}[theorem]{Example}
\def\less{<}
\def\strutdepth{\dp\strutbox}
\def \ss{\strut\vadjust{\kern-\strutdepth \sss}}
\def \sss{\vtop to \strutdepth{
\baselineskip\strutdepth\vss\llap{$\diamondsuit\;\;$}\null}}
\def\strutdepth{\dp\strutbox}
\def \sst{\strut\vadjust{\kern-\strutdepth \ssss}}
\def \ssss{\vtop to \strutdepth{
\baselineskip\strutdepth\vss\llap{$\spadesuit\;\;$}\null}}
\def\strutdepth{\dp\strutbox}
\def \ssh{\strut\vadjust{\kern-\strutdepth \sssh}}
\def \sssh{\vtop to \strutdepth{
\baselineskip\strutdepth\vss\llap{$\heartsuit\;\;$}\null}}
\def\int{\text{int}}
\def\invlimit{\smash{\lim\limits_{\raise1pt\hbox{$\longleftarrow$}}}\vphantom{\big(}}
\def\inter{\hskip 1.5pt\raise4pt\hbox{$^\circ$}\kern -1.6ex}
\def\skel(#1,#2){#1^{(#2)}}
\def\hyp {\hbox {\rm {H \kern -2.8ex I}\kern 1.25ex}}
\def\reals {\hbox {\rm {R \kern -2.8ex I}\kern 1.15ex}}
\def\integers {\hbox {\rm { Z \kern -2.8ex Z}\kern 1.15ex}}
\def\naturals {\hbox {\rm {N \kern -2.8ex I}\kern 1.20ex}}
\def\rationals {\hbox {\rm { Q \kern -2.2ex l}\kern 1.15ex}}
\def\hyp {\hbox {\rm {H \kern -2.7ex I}\kern 1.25ex}}
\begin{document}

\title{Heegaard splittings of Twisted Torus Knots}

\author{Yoav Moriah} 
\address{Department of Mathematics\\
Technion\\
Haifa, 32000 Israel}
\email{ymoriah@tx.technion.ac.il}

\author{Eric Sedgwick }
\address{Department of Computer Science\\
DePaul University\\
Chicago, USA}
\email{esedgwick@cs.depaul.edu}

\thanks {This research is supported by  grant No.~2002039 from the US-Israel Binational  Science Foundation (BSF) Jerusalem Israel. We also would like to thank the university of DePaul in Chicago and the Technion in Haifa for their
hospitality}

\date{\today}

\begin{abstract} Little is known on the classification of Heegaard splittings for hyperbolic $3$-manifolds. Although Kobayashi gave a complete classification of Heegaard splittings for the exteriors of 
$2$-bridge knots, our knowledge of other classes is extremely limited. In particular, there are very few hyperbolic manifolds that are known to have a unique minimal genus splitting. Here we demonstrate that an infinite class of hyperbolic knot exteriors, namely exteriors of certain ``twisted torus knots" originally studied by Morimoto, Sakuma and Yokota, have a unique minimal genus Heegaard splitting of genus two.  We  also conjecture that these manifolds possess irreducible yet weakly reducible splittings of genus three. There are no known examples of such Heegaard splittings.

\end {abstract}

\maketitle

\section {Introduction}\label{introduction}

\vskip 10pt

The only class of hyperbolic manifolds for which there is a complete classification is that of the exteriors
of $2$-bridge knots. This was done by T. Kobayashi in ~\cite{Ko1}. In particular there are very few manifolds which are known to have a unique minimal genus Heegaard splitting. We are interested, for 
reasons which will become clear later, in the following class of knots:

\vskip10pt

\begin{definition} \label{defttk}\rm The knot $K \subset S^3$ obtained by taking the non-trivial 
$(p,q)$-torus knot  $K(p,q) \subset S^3$ (embedded on a standard torus $V \subset S^3$), removing a 
neighborhood of a small unknotted $S^1$ around $m$ adjacent strands, which we 
denote by $C$, and doing a  $\frac {1}{s}$ - Dehn surgery along $C$ will be called a 
{\it twisted torus knot} and  setting $r = 2s$, denoted by $T(p,q,m,r)$. Since we always will take $ m = 2$ 
strands we abbreviate to  $T(p,q,r)$. (See also ~\cite{De}  and Fig. 4 ).

\end{definition}

\vskip10pt

The main theorem we prove is the following:

\vskip10pt

 {\bf Theorem 4.9:} {\it Let  $ K_m = T(p,q,r)$ be a twisted torus knot with $(p,q) = (7,17) $ and $r = 10m - 4, m \in \mathbb{Z}$. Then for sufficiently large $m \in \mathbb{Z}$ the  knot complement 
 $S^3 - N(K_m)$ has a unique, up to isotopy, minimal genus  Heegaard splitting. }

\vskip10pt 

We suspect that the restrictions on the parameters of the knots $T(p,q,r)$ are only a result of the non-trivial  proof and thus conjecture:

\vskip10pt

{\bf Conjecture 4.16:} {\it All knot exteriors $E(K)$, where  $K = T(p,q,2,r)$ and $K$ is not $\mu$-primitive, have a unique (minimal) genus two Heegaard splitting}.

\vskip10pt

A sub-class of twisted torus knots, including those which are covered in Theorem 4.9,  was previously studied by Morimoto, Sakuma and Yokota in ~\cite{MSY} and were shown to be not $\mu$-primitive 
(see Definition \ref{primitive}).  A different sub-class was studied by J. Dean in ~\cite{De}, and  contains  knots which admit  surgeries yielding Seifert fibered spaces over $S^2$ with three exceptional fibers. This accumulation of  information points to twisted torus knots as a very interesting class of knots.

In trying to classify the higher genus Heegaard splittings for these knots one must naturally address
the issue of non-minimal genus non-stabilized Heegaard splittings. One of the more tantalizing problems in the current Heegaard theory of $3$-manifolds is that it is not known whether manifolds with fewer than two boundary components can possess non-minimal genus Heegaard splittings which are weakly reducible and non-stabilized. (In alternate terminology; non-minimal genus Heegaard splittings which are  distance one, where distance is in the sense of Hempel's distance for Heegaard splittings.) The basic problem is that there are currently no known techniques  to show that a non-minimal genus weakly reducible Heegaard splitting of a closed manifold is non-stabilized. 

In this quest to find such examples it is necessary to find a Heegaard splitting for a knot manifold that 
is not $\gamma$-{\it primitive} (see definition in Section \ref {Preliminaries} below), since boundary stabilizations of such splittings are always stabilized (see  ~\cite{MS} Theorem 4.6). In  ~\cite{MSY} Morimoto, Sakuma and Yokota showed that the knots  $K_m = T(7,17, 10m - 4)$ are not $\mu$-primitive, which is an important special case. Hence the following theorem shows that  $E(K_m) = S^3 - N(K_m)$ are candidates for manifolds which could have the desired splittings as above:

\vskip10pt

 {\bf Theorem 4.4:} {\it  The knots $ K_m = T(7,17, 10m - 4)$ are not $\gamma$-primitive for  
 \underbar {all} curves   $\gamma \subset \partial S^3 - N(K)$.}
 
 \vskip10pt

Theorems \ref{noprimitive} and the theorem of Morimoto, Sakuma and Yokota in ~\cite{MSY}  provide motivation for the following  conjecture:

\vskip10pt

{\bf Conjecture 5.1:} {\it The boundary stabilized genus three Heegard splitting $(V', W')$ of the unique
minimal genus two Heegaard splitting $(V, W)$ of $E(K_m)$, where $K_m = T(7,17,2,10m - 4)$, 
is  non-stabilized. }

\vskip10pt 

We  discuss, in Section  \ref{candidate}, some of the issues  which arise in the attempt to prove that the above boundary stabilized Heegaard splittings are indeed non-stabilized   (for some background 
see ~\cite{MS}).

\vskip20pt

\section {Preliminaries}\label {Preliminaries}

\vskip10pt

In this section we recall some definitions and notation used in the paper. We then define the 
notions of {\it primitive} and {\it weakly primitive} curves on the boundary of a knot space:

\vskip 10pt

A  {\it compression  body} $W$ of genus $g$ is a compact connected orientable $3$-manifold 
which can be  represented as $(S \times [0,1]) \cup \{ 2-handles\}$, where $S$ is a genus $g$ 
closed orientable  surface and the $2$-handles are attached to $S \times \{0\}$. We require that 
all  $S^2$  components in $(S \times \{0\}) \cup \{ 2-handles\}$ be eliminated  by attaching 
$3$-balls to them. The connected  surface $S \times \{1\}$  will be denoted by $\partial_{+}W$ 
and the not necessarily connected surface $\partial W \smallsetminus  \partial_{+}W$ will be
denoted by $\partial_{-}W$

Any  compact connected orientable $3$-manifold $M$ has a decomposition as $M = V \cup W$
and $V \cap W = \partial_{+}V  = \partial_{+ }W = \Sigma$, where $V$ and  $W$ are compression bodies. 
Such a decomposition is called a {\it Heegaard splitting} and the surface $\Sigma$ will be called
a {\it Heegaard surface}.

\vskip10pt

\begin{definition}\label{primitive} \rm \hfill
\begin{enumerate}

\item A simple closed curve on the boundary of a compression body  $W$ will be called {\it primitive} 
if it meets an essential disk $D \subset W$ in a single point. An annulus on the boundary of a compression will be called {\it primitive} if its core curve is primitive.

\item Let $K \subset S^3 $ be a knot and  $(V, W)$ a Heegaard splitting of  
$S^3 - N(K)$ with $\partial (S^3 - N(K)) \subset V$.  Let $\gamma$ denote a simple closed 
curve on $\partial_{-}V$.  We say that $(V, W)$ is $\gamma$-{\it primitive}  if  there
is a vertical annulus $A$ in the compression body $V$ such that $\partial A = \gamma \cup \beta$
where $\beta   \subset  \partial_{+}V$ meets an essential disk  $D$ of $W$ in a single point.
If $\gamma$ is a meridian for $K$  we say that it is $\mu$-{\it primitive}.  (see Fig. 1).  An annulus $A$ and a disk $D$ as above  will be called a {\it $(A,D)$ - reducing pair}.

\item If there is a vertical annulus $A\subset V$ as above and an essential disk $D \subset W$
so that $A \cap D = \emptyset$ then we say that $\gamma$ is {\it weakly primitive}.

\item We say that $K \subset S^3$ is {\it $\gamma$-primitive (weakly $\gamma$-primitive)} if 
$S^3 - N(K)$ has a minimal genus  Heegaard splitting with a primitive $\gamma$ curve  (weakly primitive $\gamma$ curve). As before if  $\gamma$ is a meridian we will say that $K$ is {it $\mu$-primitive (weakly $\mu$-primitive)}.

\end{enumerate}
\end{definition}

\begin{figure}
{\epsfxsize = 3.5 in \centerline{\epsfbox{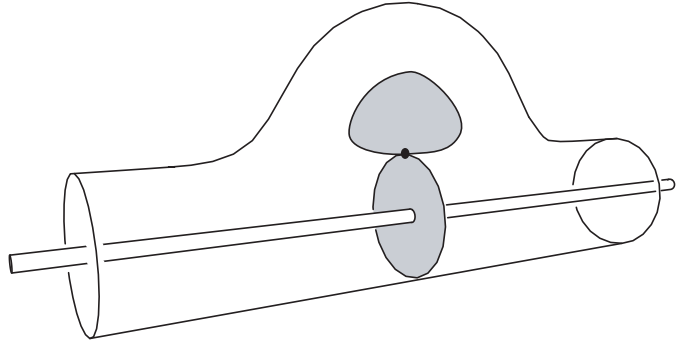}}}
\caption{A primitive meridian}
\end{figure}

\vskip10pt

\section{Heegaard splittings of Torus knots}\label{torusknots}

\vskip5pt

Since a twisted torus knot can be obtained from a torus knot by Dehn surgery in an unknot, we should
not be surprised that their Heegaard splitting are related. Hence we would like to discuss those first.
It is well know that the exterior $E(K) = S^3 - N(K) $ of a  torus knot $K = K(p,q)$ will have at most  three distinct genus two Heegaard surfaces. Generically $E(K)$ will have three distinct such  surfaces depending on the values of $p$ and $q$ (see ~\cite{Mo}).    For the purpose of this work we are interested in what we  will call the {\it middle}  Heegaard surface, defined as follows: 

\vskip0pt

\begin{definition} \label{middle} \rm
Embed the knot $K$ in the standard way into an unknotted torus  $T'$ in $S^3$. Let $T$  
denote this torus after we have removed  a single small  disk  $D$ which is disjoint from $K \subset T$.  Without loss of generality we can think of   $K$ as lying in the middle level surface  $T_0 = T \times \{0\}$ of a thickened product $T \times [-1,1]$,  . By removing a neighborhood of $K$ we obtain a compression  body $V = ( T \times [-1,1]) - N(K)$. Note that $\partial_{+}V$  is a genus two Heegaard  surface. The complementary handlebody $W$ is the union of two solid tori, $W _{i}$ the inside solid torus and 
$W_{o}$ the outside solid torus of the unknotted torus, $T'$.  These solid tori $W _{i}$ and $W_{o}$ are  attached by a 1-handle,   $D \times I$ to form  a genus two handlebody. We will say that 
 $\Sigma = \partial_+ (T \times [-1,1]) = \partial W$  is the {\it middle} Heegaard surface.

\end{definition}

\vskip0pt 

\begin{definition}\label{inandout}\rm 
The Heegaard surface obtained by taking the boundary of a regular neighborhood of the graph 
$K \cup t_{i} \cup w_{i}$ or $K \cup t_{o}\cup w_{o}$ will be called the {\it inner} and  {\it outer} 
Heegaard  surfaces and denoted by $\Sigma_i$ and $\Sigma_o$  respectively. Here $t_{i}$ is a 
small arc connecting $K$ to the core curve $ w_{i}$ of $W_{i}$ and $t_{o}$ is a small arc connecting 
$K$  to the core curve  $ w_{o}$ of $W_{o}$ (See Fig. 3).

\end{definition}

\vskip0pt

The remainder of this section will be devoted to proving the theorem below  which is of independent interest although it is not directly used in this paper. With the standard choice of $(\mu, \lambda)$ a meridian-longitude pair there is an identification between curves on the boundary of a knot space 
and a ``slope'' $r \in \mathbb{Q} \cup \{\infty\}$. When the context is clear we  often do not make the distinction between a slope and a curve.

\vskip10pt

\begin{figure}
{\epsfxsize = 3.2 in \centerline{\epsfbox{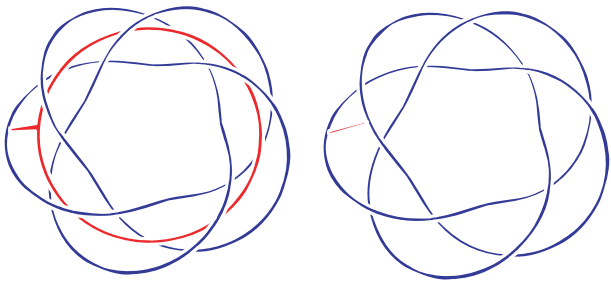}}}
\caption{Tunnel systems for a torus knot: the inner system and the middle system}

\end{figure} 

\begin {theorem} 
\label{primitive}
Let $\Sigma$ denote the  middle Heegaard surface for the non-trivial torus knot $K(p,q)$.  
The following are equivalent:
\begin{enumerate}
\item [(i)] $\Sigma$ is $\mu$-primitive, (i.e., $1/0$-primitive).

\item [(ii)] $\Sigma$ is $\gamma$-primitive for some s.c.c.  $\gamma \subset \partial (S^3 - N(K))$.

\item [(iii)]There exist $r,s \in \mathbb{Z}$ so that $|ps - rq| = 1$ and
either $r=1$ or $s=1$.

\item [(iv)] $\Sigma$ is isotopic to either the $\Sigma_i$ or $\Sigma_o$ Heegaard surfaces.
\end{enumerate}
\end{theorem}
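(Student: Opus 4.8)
The plan is to prove the cycle of implications (i) $\Rightarrow$ (ii) $\Rightarrow$ (iii) $\Rightarrow$ (iv) $\Rightarrow$ (i); the first is immediate, since a meridian is a particular simple closed curve on $\partial(S^3-N(K))$, so all the content is in the last three. Throughout I would fix coordinates on the standard torus $T'\supset K$: let $(\mu_i,\lambda_i)$ be the meridian-longitude basis of the inner solid torus $W_i$, so that $\mu_i=\partial D_i$ bounds the meridian disk $D_i$ of $W_i$, $\lambda_i=\partial D_o$ bounds the meridian disk $D_o$ of $W_o$, and $K=K(p,q)$ has class $p\lambda_i+q\mu_i$. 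Since $\Sigma=\partial H$ with $H=T\times[-1,1]$ and $W=W_i\cup W_o\cup(\text{1-handle})$, the essential disks of $W$ are, up to isotopy and handle slides, exactly $D_i$, $D_o$, the separating disk $D_{\mathrm{sep}}$ of the $1$-handle, and their band-sums. I also record that in the compression body $V=H-N(K)$ every essential simple closed curve $\gamma$ on $\partial_-V=\partial N(K)$ cobounds a vertical annulus with a pushoff $\beta_\gamma\subset\Sigma$, so that $(V,W)$ being $\gamma$-primitive is equivalent to $\beta_\gamma$ meeting some essential disk of $W$ in a single point.

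The conceptual core is a dictionary between primitivity and intersection numbers on $T'$. I would show that a reducing pair $(A,D)$ with $\partial A=\gamma\cup\beta$ and $|\beta\cap\partial D|=1$ corresponds to a simple closed curve $c\subset T'$ with the two properties $|c\cdot K|=1$ and ($|c\cdot\mu_i|=1$ or $|c\cdot\lambda_i|=1$). The first property reflects that the meridional direction of $K$ links $K$ exactly once, so the vertical annulus together with the single intersection with $\partial D$ pushes down to a curve meeting $K$ once; the second reflects that $D$ is the meridian disk of one of the two solid tori, which forces $c$ to be a longitude of $W_i$ or of $W_o$. Writing $c=r\lambda_i+s\mu_i$ and computing the intersection form gives $|c\cdot K|=|ps-rq|$, while $|c\cdot\mu_i|=|r|$ and $|c\cdot\lambda_i|=|s|$; hence the two properties translate precisely into $|ps-rq|=1$ together with $r=1$ or $s=1$, which is (iii). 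This proves (ii) $\Rightarrow$ (iii).

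I expect the main obstacle to lie in this implication, for two reasons. First, the primitizing curve $\gamma$ in (ii) is arbitrary, so one must verify that the dictionary still forces $|c\cdot K|=1$ rather than a larger value; I would handle this by isotoping $\gamma$ within $\partial N(K)$ and across the vertical annulus so that $\beta_\gamma$ meets $\partial D$ minimally, then reading the linking number off the product structure of $V$. Second, and more delicate, one must exclude the separating disk and the band-sum disks as the disk $D$: no closed curve meets $D_{\mathrm{sep}}$ an odd number of times, and a band-sum disk cannot be met once because its boundary runs over both solid tori and so meets every vertical pushoff at least twice. This is exactly what forces $D\in\{D_i,D_o\}$, and hence $r=1$ or $s=1$ as opposed to a mere B\'ezout relation; I would establish it by an innermost-disk and outermost-arc analysis of $D\cap(T\times\{\pm1\})$.

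For (iii) $\Rightarrow$ (iv), assume $s=1$, so that $c=r\lambda_i+\mu_i$ satisfies $|c\cdot K|=|p-rq|=1$ and $|c\cdot\lambda_i|=1$; thus $c$ is a longitude of $W_o$ meeting $K$ in a single point. That point $c\cap K$ guides an arc realizing the tunnel $t_o$ from $K$ to the core $w_o$, and the primitive disk then provides an isotopy carrying the middle surface $\Sigma$ onto $\Sigma_o=\partial N(K\cup t_o\cup w_o)$; the case $r=1$ is symmetric and gives $\Sigma\simeq\Sigma_i$. Finally, for (iv) $\Rightarrow$ (i) I would exhibit the reducing pair directly on the tunnel surfaces: taking $W=N(K\cup t_i\cup w_i)$ (resp. $N(K\cup t_o\cup w_o)$), the belt disk of the $1$-handle attached along the tunnel is an essential disk of $W$ met once by the meridian $\mu$ pushed along the vertical annulus running beside the tunnel, so $\Sigma_i$ and $\Sigma_o$ are $\mu$-primitive. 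This closes the cycle and establishes the equivalence of (i)--(iv).
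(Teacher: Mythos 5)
Your cycle (i)$\Rightarrow$(ii)$\Rightarrow$(iii)$\Rightarrow$(iv)$\Rightarrow$(i) is the same as the paper's, and your treatments of the three easy arrows are fine in outline ((iii)$\Rightarrow$(iv): the $(r,s)$-curve meets $K$ once, serves as a tunnel, and is a core of one of the solid tori; (iv)$\Rightarrow$(i): the splittings $\Sigma_i,\Sigma_o$ are $\mu$-primitive --- though note that the belt disk of the tunnel lies in the handlebody $N(K\cup t\cup w)$, i.e.\ on the compression-body side of the splitting of $S^3-N(K)$, so the reducing disk you describe is on the wrong side as stated; the paper simply quotes the known $\mu$-primitivity of these splittings). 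The genuine gap is in (ii)$\Rightarrow$(iii), where your ``dictionary'' asserts essentially what has to be proved and neither supporting claim is established. First, you reduce to $D\in\{D_i,D_o\}$ by claiming that a band-sum disk ``meets every vertical pushoff at least twice''; no argument is given, and this is very close to the statement being proved (moreover, the essential disks of a genus-two handlebody are not exhausted by $D_i$, $D_o$, $D_{\mathrm{sep}}$ and band sums in any sense your sketch makes usable, and $T\times\{\pm1\}$ are subsurfaces of $\Sigma$, not disks, so an ``innermost-disk analysis of $D\cap(T\times\{\pm1\})$'' does not parse as a surgery argument). Second, nothing in your sketch controls the position of the annulus $A_\gamma$ relative to the product structure, so ``reading the linking number off the product structure'' to get $|c\cdot K|=1$ is not available: the upper boundary of a vertical annulus over an arbitrary $\gamma$ need not be a level curve, and distinct vertical annuli over the same $\gamma$ can have non-isotopic upper boundaries, so ``the pushoff $\beta_\gamma$'' is not even well defined.

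The paper's proof of (ii)$\Rightarrow$(iii) avoids both difficulties. It introduces the disk $D_{punct}=D\times\{0\}$ (the cocore of the handle of $W$) and the disk $D_{p/q}=\alpha_{p/q}\times[-1,1]\subset V$, proves as a separate claim that $A_\gamma\cap D_{p/q}=\emptyset$, and replaces $D_\gamma$ by an outermost subdisk $D_\gamma'$ (outermost with respect to $D_\gamma\cap D_{punct}$) chosen to miss the single point $D_\gamma\cap A_\gamma$; such a subdisk is automatically a meridian disk of $W_i$ or $W_o$, so no classification of the disks of $W$ is needed. One then has three pairwise disjoint essential arcs $a_\gamma$, $d_\gamma$, $d_{p/q}$ in a once-punctured level torus; closing them up across the puncture gives curves of slopes $r/s$, $0/1$ (or $1/0$) and $p/q$ meeting pairwise at most once, and the case analysis, using non-triviality of $K(p,q)$ to rule out the parallel cases, yields $|ps-qr|=1$ with $r=1$ or $s=1$. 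In particular ``$|c\cdot K|=1$'' is the \emph{output} of that analysis, not an input; to repair your approach you would need substitutes for the disjointness claim $A_\gamma\cap D_{p/q}=\emptyset$ and for the outermost-subdisk step, which are precisely the points your proposal leaves as placeholders.
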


\vskip0pt

\begin{proof}\hfill

i)  $\Longrightarrow$ ii)   Set  $\gamma = \mu$. It is a simple closed curve with slope $\frac{1}{0}$.

\vskip 5pt

iii) $\Longrightarrow$ iv)   If there exist such a pair $r,s$ then there is an $(r,s)$- curve 
on the torus $T$  which meets the knot $K(p,q)$ in a single point.  This  curve is a tunnel 
representing the middle splitting.   Furthermore since either  $r=1$ or $s=1$, when this tunnel 
is pushed into one of the solid tori, depending on whether $r=1$ or $s=1$, it represents  a core
there. Hence $\Sigma_{mid}$ is isotopic to either the $\Sigma_i$ or $\Sigma_o$ splittings.

\vskip5pt

iv)  $\Longrightarrow$ i)  Both  the $\Sigma_i$ and $\Sigma_o$ splittings are  $\mu$-primitive 
(see ~\cite{Mo}).   If the middle splitting is isotopic to either, it is also $\mu$-primitive.

 \vskip5pt

ii) $\Longrightarrow$ iii)   This is the hard case.  The argument follows:

Assume that $\Sigma$ is $\gamma$-primitive, then there is a disk-annulus pair, 
$(D_\gamma,A_\gamma)$ so that $D_\gamma \cap A_\gamma$ is a single point
and  $D_\gamma \subset W$, $A_\gamma \subset V$.

Define two compressing disks, one for each of the compression bodies $V$ and $W$.   
Consider  $D_{punct}  = D \times \{0\}$, where  $D$ is the disk  from Definition \ref{middle}.
The disk  $D_{punct} $ compresses the genus two handlebody $W$ into the upper and lower 
solid tori. Let $\alpha_{p/q}$ be an arc properly embedded in $T$ so that if we shrink $D$ to a 
point  $\alpha_{p/q}$ becomes a $(p,q)$ - curve on the torus $T'$. Let 
$D_{p/q} = \alpha_{p/q} \times [-1,1]$ be the disk obtained by taking the product of  
$\alpha_{p/q} \subset T$  with the interval $[-1,1]$.  This is a non-separating disk that 
is disjoint from the knot $K$.  Arrange $A_\gamma$ and $D_\gamma$ to intersect 
$D_{punct}$ and $D_{p/q}$ minimally, subject to the condition that they meet in a single point.

\vskip10pt

\begin{figure}
{\epsfxsize = 3.2 in \centerline{\epsfbox{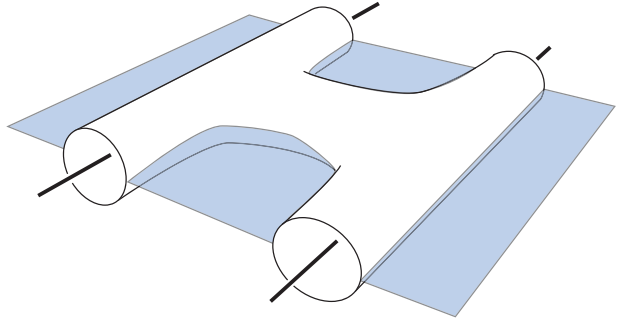}}}
\caption{The disk $D_{punct}$}

\end{figure} 

\vskip10pt

\begin{figure}
{\epsfxsize = 3.2 in \centerline{\epsfbox{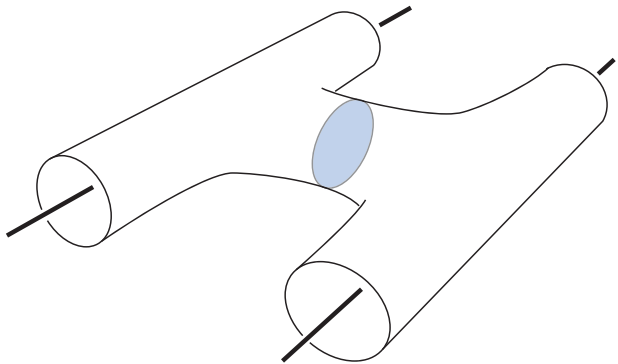}}}
\caption{The disk $D_{p/q}$}

\end{figure}

\begin{claim}

The annulus $A_\gamma$ is disjoint from the disk $D_{p/q}$.

\end{claim}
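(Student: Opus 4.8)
The plan is to run an innermost-disk, outermost-arc analysis on the intersection $A_\gamma \cap D_{p/q}$, exploiting three facts: that $V$ is irreducible (being a compression body), that the vertical annulus $A_\gamma$ is incompressible, and that $D_{p/q}$ is disjoint from $\partial_- V$. First I would isotope $A_\gamma$ and $D_{p/q}$, keeping the single point $A_\gamma \cap D_\gamma$ fixed, so that they meet transversely with $|A_\gamma \cap D_{p/q}|$ minimal. Since $D_{p/q}$ is built as $\alpha_{p/q}\times[-1,1]$ with $\alpha_{p/q}$ disjoint from $K$, the disk $D_{p/q}$ misses $N(K)$; hence $D_{p/q}\cap \partial_- V=\emptyset$ and $\partial D_{p/q}\subset \Sigma=\partial_+ V$. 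Consequently $A_\gamma \cap D_{p/q}$ is a compact $1$-manifold whose closed components lie in $\mathrm{int}\,V$ and whose arc components have endpoints on $\beta\cap \partial D_{p/q}\subset\Sigma$.

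The first step is to eliminate closed curves. An innermost circle on $D_{p/q}$ bounds a subdisk $\delta\subset D_{p/q}$ with $\mathrm{int}\,\delta\cap A_\gamma=\emptyset$, and $\partial\delta$ lies on $A_\gamma$. If $\partial\delta$ is inessential on $A_\gamma$, it cobounds a disk there; together with $\delta$ this forms a sphere, which bounds a ball by irreducibility, and an isotopy across that ball lowers $|A_\gamma\cap D_{p/q}|$, contradicting minimality. If $\partial\delta$ is essential on $A_\gamma$, then $\delta$ compresses $A_\gamma$; but $A_\gamma$ meets $\partial_- V$ in the essential slope $\gamma$, and since $\partial_- V$ is incompressible in $V$ the annulus $A_\gamma$ is incompressible, a contradiction. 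Thus no closed components survive.

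The decisive observation concerns the arcs. Because $\partial D_{p/q}\subset\Sigma$ and $D_{p/q}$ misses $\partial_- V$, every arc of $A_\gamma\cap D_{p/q}$ has both endpoints on $\beta$, i.e. on a single boundary circle of the annulus $A_\gamma$. Every properly embedded arc in an annulus with both endpoints on one boundary circle is separating and cuts off a disk, so each such arc is boundary-parallel in $A_\gamma$, and in particular none is essential (spanning). I would then take an arc $a$ that is outermost on $D_{p/q}$, cutting off an outermost disk $\Delta\subset D_{p/q}$ with $\mathrm{int}\,\Delta\cap A_\gamma=\emptyset$ and $\partial\Delta=a\cup c$, $c\subset\partial D_{p/q}\subset\Sigma$. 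Since $a$ bounds a disk in $A_\gamma$, the band of $A_\gamma$ running along $a$ can be pushed across $\Delta$ as an honest isotopy (not a compression, precisely because $a$ is inessential); as $\mathrm{int}\,\Delta$ is disjoint from $A_\gamma$, this isotopy lowers $|A_\gamma\cap D_{p/q}|$, again contradicting minimality. Hence no arcs remain and $A_\gamma\cap D_{p/q}=\emptyset$.

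I expect the main obstacle to be the bookkeeping of this final isotopy: one must verify that the outermost-arc move across $\Delta$ is realizable as an ambient isotopy of the embedded annulus and that it strictly decreases the geometric intersection without creating new components, which is exactly where the inessentiality of $a$ in $A_\gamma$ is used. The other point requiring care is confirming incompressibility of the vertical annulus $A_\gamma$; this rests on $\gamma$ being an essential slope on the torus $\partial_- V$ together with the incompressibility of $\partial_- V$ in the compression body $V$.
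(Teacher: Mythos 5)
Your setup (minimizing intersections, eliminating closed curves via incompressibility of $A_\gamma$ and irreducibility of $V$, and observing that every arc of $A_\gamma\cap D_{p/q}$ has both endpoints on $\beta$ and is therefore inessential in $A_\gamma$) agrees with the paper. The gap is in the final step, which is exactly where the real content of the claim lies. Having an outermost disk $\Delta\subset D_{p/q}$ with $\mathrm{int}\,\Delta\cap A_\gamma=\emptyset$ does \emph{not} yield an intersection-reducing isotopy merely because the arc $a$ is inessential in $A_\gamma$. No isotopy supported near $\Delta$ can remove $a$: the annulus crosses $D_{p/q}$ transversely along $a$, so the rest of $A_\gamma$ is attached to the band on \emph{both} sides of $D_{p/q}$, and any surface in a neighborhood of $\Delta$ joining those two sides must meet $D_{p/q}$. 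The only way to realize the move as an ambient isotopy is to push the whole subdisk $E\subset A_\gamma$ cut off by $a$ across, and that requires the properly embedded disk $E\cup\Delta$ to be boundary parallel in $V$ with the product region on the correct side. In the compression body $V$ this is not automatic --- $\partial_+V$ is compressible, and $D_{p/q}$ is itself a compressing disk --- so $E\cup\Delta$ could instead be an essential disk: isotopic to $D_{p/q}$ (the unique non-separating disk of $V$) or a separating disk (a band sum of two copies of $D_{p/q}$). These are precisely the cases the paper's proof confronts, disposing of them by showing that in each such case $E$ could not be completed to the annulus $A_\gamma$ (or, when the parallelism is on the side of $E$, performing the push). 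Your parenthetical ``not a compression, precisely because $a$ is inessential'' conflates two different things: inessentiality of $a$ controls the abstract topology of the surface obtained by boundary-compressing along $\Delta$, not the existence of an isotopy realizing that operation.

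If instead you intend a cut-and-paste (replace $E$, or a band of $A_\gamma$, by a push-off of $\Delta$) rather than an isotopy, then minimality gives no contradiction unless you re-verify the constraints under which the minimum was taken: the new annulus has its $\partial_+V$-boundary altered along an arc parallel to a piece of $\partial D_{p/q}$, so it need no longer meet $\partial D_\gamma$ in a single point (it could miss it, or meet it many times), and hence need not be an admissible competitor for the $(A,D)$-pair. So either route requires the additional case analysis of the disk $E\cup\Delta$ inside the compression body, which is the heart of the paper's argument and is absent from your proposal.
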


\vskip0pt

\begin{proof}

Assume in contradiction that  $A_\gamma \cap  D_{p/q} \neq \emptyset$. The intersection cannot 
contain simple closed curves which are essential on $A_\gamma$ because this would imply
that the torus knot exterior has compressible boundary. By minimality there are no inessential  such curves. So all components of intersection are arcs. Since $D_{p/q} \cap \partial X = \emptyset$ these arcs cannot be essential in $A_{\gamma}$. So $A_\gamma \cap  D_{p/q}$ is composed of inessential arcs in $A_{\gamma}$.

Consider now an outermost arc of intersection $\alpha$ on $A_{\gamma}$ it cuts off a sub-disk $E$ 
of  $A_\gamma - D_{p/q}$ and a sub-disk $E'$ on $D_{p/q}$. The disk $E \cup E'$ cannot be 
boundary parallel in the compression body by minimality. If it is isotopic to an essential non-separating 
disk then it is isotopic to $D_{p/q}$ which is the unique such disk. If the parallel region is on the same side as $E$  we can push $E$ through $D_{p/q}$ to reduce intersection. If the parallel region is on the other side, then $E$ cannot be connected to the rest of the annulus to form $A_{\gamma}$. If it isotopic to a non-separating essential disk in the genus two compression body then it is a band sum of two copies of $D_{p/q}$. However in this case, as before, $E$ cannot be connected to the rest of the annulus to form $A_{\gamma}$.

\end{proof}

\vskip10pt

Consider now the two disks  $D_{\gamma}$ and $D_{punct}$ in $W$. Assume that they  intersect  
and consider an outermost subdisk of $D_\gamma$ with respect to its intersection with  $D_{punct}$.   
We can choose such a sub-disk, called  $D_\gamma'$, that does not contain the point 
of intersection with $A_\gamma$.   This outer subdisk $D_\gamma'$ meets $D_{punct}$ 
in a single arc and since it is essential by minimality it must be  a meridional disk for 
either the upper $W_{o}$ or lower $W _{i}$ solid torus. Without loss of generality we 
will assume that it is a meridional for $W_{o}$ and represents a $0/1$ arc 
on the level torus $T_1 = T \times \{1\}$.  If $D_\gamma$ does not meet $D_{punct}$ then 
we will instead take $D_\gamma' = D_\gamma$ and recall that $\partial D_\gamma$ is a 
closed curve in $T_1$ and has a single point of intersection with $A_\gamma$. 
The argument for $W_i$ and $T_0 = T\times\{0\}$ with the slope $1/0$ is symmetric.

On the top (or bottom) punctured torus, say $T_1$, ( $T_0$) we can identify three essential arcs: 
$a_\gamma$ a component of $\partial A_\gamma \cap T_1$,  
$d_\gamma = \partial D_\gamma' \cap T_1$ and $d_{p/q} = \partial D_{p/q} \cap T_1$.
If $D_\gamma$ is disjoint from $D_{punct}$, then $d_\gamma$ is actually a closed curve. From
our preceding arguments we know that $a_\gamma$ and $d_\gamma$ are disjoint in $T_1$, as 
are $a_\gamma$ and $d_{p/q}$.    Connect the arcs across the puncture to obtain closed curves:
$\widehat{a_\gamma}, \widehat{d_\gamma}$ and $\widehat{d_{p/q}}$. Each pair of the closed 
curves obtains at most one intersection in the puncture, so we have that $\widehat{a_\gamma}$ 
and $\widehat{d_\gamma}$ meet at most once, and $\widehat{a_\gamma}$ and $\widehat{d_{p/q}}$ meet at most once. Now, the slope of $\widehat{d_\gamma}$ is  $0/1$ (or $1/0$), the slope of
$\widehat{d_{p/q}}$ is $p/q$ and  $r/s$ is the slope of the closed curve $\widehat{a_\gamma}$.

If $\widehat{a_\gamma} \cap \widehat{d_{p/q}} = \emptyset$, i.e., $\widehat{a_\gamma}$ and
$\widehat{d_{p/q}}$ are parallel and $\widehat{a_\gamma} \cap \widehat{d_{\gamma}} = \{pt\}$
or  $\widehat{a_\gamma} \cap \widehat{d_{\gamma}} = \emptyset$ i.e.,  
$\widehat{a_\gamma}$ and $\widehat{d_{\gamma}}$ are parallel   and 
$\widehat{a_\gamma} \cap \widehat{d_{p/q}} = \{pt\}$ then $|p0 - q1| = 1$ (or $|p1 - q0| = 1$) that 
is $q = 1$ (or $p = 1$) contradicting the fact that $T(p,q)$ is non-trivial. Similarly they cannot all be
parallel. If $\widehat{a_\gamma} \cap \widehat{d_{\gamma}} = \{pt\}$ and
$\widehat{a_\gamma} \cap \widehat{d_{p/q}} = \{pt\}$ then $|ps - qr| = 1$ and $|r0 - s1| = 1$, i.e, $s= 1$ (or $|r1 - s0| = 1$,  i.e., $r = 1$). Then conclusion (iii)  of Theorem \ref {primitive} holds.

\end{proof}

\vskip10pt

\section{Twisted torus knots}\label{twistedtk}

\vskip10pt

In this section we recall the definition of  a special sub-class of  twisted torus knots $T(7,17,r)$. This class will play a major role for the rest of the paper.

\begin{definition} \label{defttk1}\rm The knot $K \subset S^3$ obtained by taking the $(7,17$)-torus knot 
$T(7,17)$ in $S^3$ (embedded on a standard torus $V \subset S^3$), removing a  neighborhood 
of a small unknotted $S^1$ around $2$ adjacent strands, which we  denote by $C$,  and doing a  
$\frac {1}{5m - 2}$ - Dehn surgery, $m \in \mathbb Z$, along $C$ will be denoted by $T(7,17,2,10m - 4)$. As before, we abbreviate to  $T(7,17,10m - 4)$. (See also ~\cite{De}  and Fig. 4 ).

\end{definition}

\begin{figure}
{\epsfxsize = 4.8 in \centerline{\epsfbox{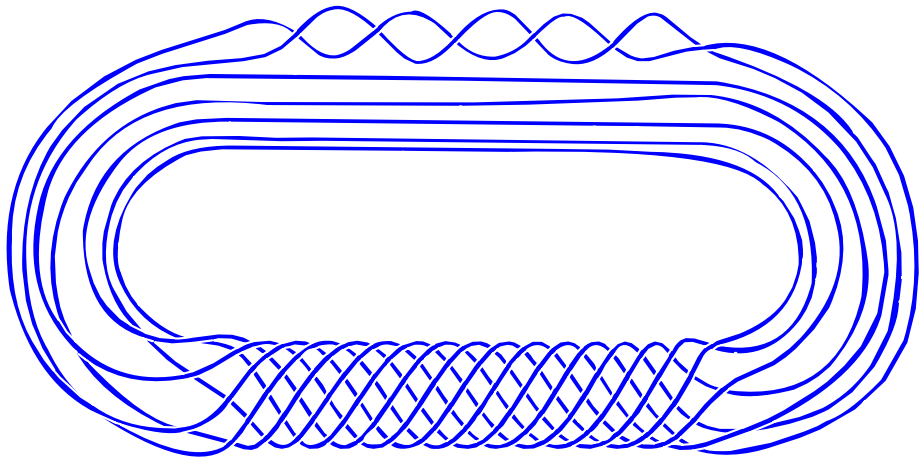}}}
\caption{A knot of Morimoto, Sakuma and Yokota }

\end{figure}

\begin{remark}\label{tunnelnumber one}\rm
All  knots in $S^3$ of the form $T(p,q,r),  r  \in \mathbb{Z}$, are tunnel 
number one knots:   Given a twisted torus knot $T(p,q,r)$ the underlying torus knot $T(p,q)$ 
has an unknotting tunnel which is an essential arc on the cabling annulus. We can consider 
such an arc in the link $L(p,q) = T(p,q) \cup C$. Since $C$ can be isotoped into the Heegaard surface
so that the slope determined by the surface on $\partial N(C)$ is $0$, then after the surgery on $C$ which gives $T(p,q,r)$,  the arc will be an unknotting tunnel for $T(p,q,r)$. Hence we have a genus two Heegaard splitting  $(V, W)$ for $E(T(p,q,r))$,   where $V$ is the compression body $\overline{N(K \cup t)}$  less  a smaller neighborhood of $T(p,q,r)$, and $W$ is the handlebody $S^3 - int(V)$.  Since this Heegaard  splitting is induced by the middle  Heegaard splitting of  the underlying torus knot $ T(p,q)$ we call it  the {\it middle} Heegaard splitting and denote it by  $\Sigma_{mid}$.

It is a theorem of Morimoto Sakuma and Yokota, (see ~\cite{MSY}),  that knots $K_m$ of the  
form $T(7,17,10m - 4), m  \in \mathbb{Z}$ are not $\mu$-primitive. Or, in other words, they do 
not have a $(1,1)$-decomposition  (see for example ~\cite{Mo1}).

\end{remark}

\vskip10pt

 \begin{proposition}\label{hyperbolic}  
 The knots $ K_m = T(7,17,10m - 4)$ are hyperbolic knots for all $m \in \mathbb{Z}$.
 
 \end{proposition}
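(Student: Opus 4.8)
The plan is to establish hyperbolicity by appealing to Thurston's hyperbolization theorem for knot complements, which reduces the problem to showing that $K_m$ is neither a torus knot nor a satellite knot. Since $K_m = T(7,17,10m-4)$ is obtained from the link $L(7,17) = T(7,17) \cup C$ by $\frac{1}{5m-2}$-Dehn surgery along the unknotted curve $C$, the natural approach is to first understand the geometry of the complement of this link in $S^3$, and then apply Thurston's hyperbolic Dehn surgery theorem.

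First I would analyze the link complement $S^3 - N(T(7,17) \cup C)$. The curve $C$ encircles two adjacent strands of the torus knot on the standard torus $V$. My plan is to verify that this link complement is itself hyperbolic, which one can do by checking that it is irreducible, atoroidal, and not Seifert fibered; alternatively, one can observe that $C$ is an unknot that is not isotopic into the complement of $T(7,17)$ in any essential way, so the link is not a torus link or a split link, and then invoke Thurston's theorem for link complements. Granting that the link complement is hyperbolic, Thurston's hyperbolic Dehn surgery theorem guarantees that all but finitely many surgery slopes $\frac{1}{n}$ on $C$ yield hyperbolic manifolds. Since the slopes $\frac{1}{5m-2}$ range over an infinite family as $m$ varies over $\mathbb{Z}$, this would immediately give hyperbolicity for all sufficiently large $|m|$.

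The difficulty is that the proposition asserts hyperbolicity for \emph{all} $m \in \mathbb{Z}$, not merely all sufficiently large $m$, so the soft Dehn-surgery argument alone does not suffice to cover the finitely many exceptional slopes. To handle every $m$, I would instead argue directly that $K_m$ admits no essential torus and is not a torus knot. For the torus-knot alternative, I would use the fact, already recorded in Remark~\ref{tunnelnumber one} via the work of Morimoto, Sakuma and Yokota, that $K_m$ is not $\mu$-primitive and has no $(1,1)$-decomposition; since every torus knot \emph{is} $\mu$-primitive (its middle Heegaard surface satisfies condition (i) of Theorem~\ref{primitive}), $K_m$ cannot be a torus knot for any $m$. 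For the satellite alternative, I would show that any incompressible torus in $E(K_m)$ would have to survive the surgery description and produce an essential torus in the hyperbolic link complement, or else bound a companion solid torus forcing $K_m$ to be a nontrivial cable or satellite; a careful examination of how $C$ meets the cabling annulus of $T(7,17)$ rules this out.

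The main obstacle will be the satellite case: ruling out essential tori rigorously requires either a direct incompressibility analysis in the surgered manifold or a genus/bridge-number obstruction, and this is typically the most delicate part of establishing hyperbolicity for a surgery family. I expect the cleanest route is to combine the hyperbolicity of the link complement (handling all but finitely many $m$ by Thurston's theorem) with the non-torus-knot conclusion from Theorem~\ref{primitive} and the Morimoto--Sakuma--Yokota result, and then dispatch any remaining exceptional values of $m$ by an explicit check that no companion torus exists, using the tunnel-number-one structure to constrain the possible satellite decompositions.
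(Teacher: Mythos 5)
There is a genuine gap, and it is exactly where you yourself locate ``the main obstacle'': the satellite case is never actually closed. Your exclusion of the torus-knot alternative is fine and matches the paper (torus knots are $\mu$-primitive, $K_m$ is not by Morimoto--Sakuma--Yokota), and your observation that Thurston's hyperbolic Dehn surgery theorem can only give hyperbolicity for all but finitely many $m$ is correct. But for the remaining alternative --- an essential torus in $E(K_m)$ --- you offer only a plan (``survive the surgery description,'' ``an explicit check that no companion torus exists''), with no mechanism that works for \emph{every} $m$, and in particular no way to handle the finitely many exceptional slopes that the soft surgery argument leaves over. The missing idea, which is the heart of the paper's proof, is the Morimoto--Sakuma classification of \emph{non-simple tunnel number one knots}: all unknotting tunnels of such knots arise from a $(1,1)$-decomposition of the pair $(S^3,K)$, so every non-simple tunnel number one knot has a primitive meridian. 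Since $K_m$ is tunnel number one but is not $\mu$-primitive (it is tunnel number super additive by \cite{MSY}), it cannot be non-simple; hence it is atoroidal for all $m$ at once, with no exceptional-slope analysis, and being also not a torus knot it is hyperbolic by Thurston. Your closing remark about ``using the tunnel-number-one structure to constrain the possible satellite decompositions'' gestures in this direction, but without invoking that classification the argument does not go through.

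A secondary caution: your fallback route through hyperbolicity of the link complement $S^3-N(T(7,17)\cup C)$ is not available as stated. In the paper that hyperbolicity (Corollary \ref{linkhyp}) is proved \emph{after} Proposition \ref{hyperbolic} and its proof uses the hyperbolicity of the knots $K_m$ (via Lemma \ref{toroidal}), so you would need an independent, and nontrivial, argument that the link is hyperbolic; the sentence ``$C$ is not isotopic into the complement of $T(7,17)$ in any essential way'' is not such an argument. Since that route in any case only covers large $|m|$, the decisive defect remains the unresolved satellite case.
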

 
 \begin{proof} \rm By Thurston if a knot is atoroidal (simple) it is either a torus knot or hyperbolic.
Non-simple tunnel number one knots have been classified by Sakuma and Morimoto  (see ~\cite{MS1}). 
In particular, all of their unknotting tunnels have  been classified (see Proposition 1.8, Theorem 2.1, 
and Theorem 4.1 of ~\cite{MS1}). They all come  from  a (1,1)-decomposition for the pair  $(S^3,K)$. 
So the knots $K$  have a primitive meridian. So do torus knots. But the knots  $T(7,17,10m - 4)$ 
do not have a primitive meridian as they are  tunnel number super additive by ~\cite{MSY}. Hence the knots $T(7,17,10m - 4)$  cannot be non-simple and thus are atoroidal. Since they are also not  torus 
knots  they must be hyperbolic.
 
 \end{proof}
 
 \vskip10pt
  
 \subsection{ The knots $K_m = T(7,17,10m - 4)$ are not $\gamma$-primitive} \hfill
 
 \vskip20pt 
 
 It turns out, as will be discussed in Section \ref{candidate} below, that in order for the knot exteriors
 of $K_m = T(7,17, 10m - 4)$ to have weakly reducible and non-stabilized non-minimal Heegaard splitting the knots  $K_m$ must have the additional property that they are not $\gamma$-primitive 
for all curves $\gamma \subset  \partial S^3 - N(K)$. This is exactly the content of our next theorem:
 
 \vskip10pt
 
 \begin{theorem} 
 \label{noprimitive}
 
 The knots $ K_m = T(7,17, 10m - 4)$  are not $\gamma$-primitive for any curve  
  $\gamma \subset \partial S^3 - N(K)$ and any $m \in \mathbb{Z}$.

\end{theorem}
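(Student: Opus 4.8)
The plan is to fix an arbitrary simple closed curve $\gamma \subset \partial E(K_m)$ with some slope, and to argue that if $(V,W)$ were a minimal genus (genus two) Heegaard splitting realizing $\gamma$-primitivity, then we could manufacture a primitive meridian, contradicting the theorem of Morimoto, Sakuma and Yokota cited in Remark \ref{tunnelnumber one}. The key structural input is that these knots are tunnel number one, so every genus two splitting is parametrized and we can appeal to the classification philosophy: a $\gamma$-primitive structure gives an $(A,D)$-reducing pair, a vertical annulus $A \subset V$ with $\partial A = \gamma \cup \beta$ and an essential disk $D \subset W$ meeting $\beta$ once. I would first reduce to the middle Heegaard surface $\Sigma_{mid}$, since for genus two splittings of these hyperbolic knot exteriors one needs to know that $\Sigma_{mid}$ is (up to isotopy) the only candidate carrying such a structure; here I expect the proof to invoke the uniqueness of the minimal genus splitting, or at least to show that any $\gamma$-primitive splitting can be isotoped into the standard $\Sigma_{mid}$ position coming from the underlying torus knot.

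Next, with $(V,W)$ in standard form relative to the torus knot data, I would run an argument closely parallel to the proof of the implication (ii) $\Longrightarrow$ (iii) in Theorem \ref{primitive}. That is, I would introduce the analogue of the disks $D_{punct}$ and $D_{p/q}$ adapted to the twisted torus knot $K_m$, establish (by an outermost-arc and minimality argument, exactly as in the Claim above) that the vertical annulus $A_\gamma$ can be made disjoint from the fixed $(p,q)$-type disk, and then push down to the punctured torus to read off slopes. The presence of the $r = 10m-4$ twisting along $C$ modifies the surgery description, so the relevant slope computation on the punctured torus will involve the linking data of the twist region rather than the clean $p/q$ picture; I would need to track how the vertical annulus $A_\gamma$ interacts with the region where the two parallel strands were twisted.

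The conclusion would then be the arithmetic step: a $\gamma$-primitive pair forces a relation of the form $|ps - rq| = 1$ with an extra divisibility or smallness constraint on $(r,s)$ (the analogue of ``$r=1$ or $s=1$'' in Theorem \ref{primitive}), and I would show that for $(p,q) = (7,17)$ together with the twisting parameter $10m-4$ no such solution can exist, or that any solution would contradict the non-$\mu$-primitivity already established. In effect Theorem \ref{primitive} gives the equivalence, for the middle surface, of $\gamma$-primitivity for \emph{some} $\gamma$ with $\mu$-primitivity, so once the splitting is pinned to $\Sigma_{mid}$ the result becomes: $\gamma$-primitive for some $\gamma$ $\Longrightarrow$ $\mu$-primitive $\Longrightarrow$ contradiction with \cite{MSY}.

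The main obstacle I anticipate is the first reduction, namely verifying that \emph{every} minimal genus Heegaard splitting of $E(K_m)$ is the middle splitting (or can be normalized to interact with the torus-knot disks in the controlled way Theorem \ref{primitive} requires), since Theorem \ref{primitive} as stated is only about the specified middle surface $\Sigma$ and not about an arbitrary genus two splitting. The genuinely new content beyond Theorem \ref{primitive} is therefore accounting for the $\frac{1}{5m-2}$-surgery on $C$: the twisting changes the homological slope data so that the annulus $A_\gamma$ and the meridian disk are no longer simply read off the standard torus, and the delicate point will be showing the twist region cannot be used to create the single point of intersection demanded by primitivity. I expect the twisting to obstruct the slope equation $|ps-rq|=1$ precisely because $10m-4$ grows, which is likely why the statement is uniform in $m$ rather than requiring $m$ large.
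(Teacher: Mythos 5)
Your plan has a genuine gap, and in fact two. First, the reduction of an arbitrary $\gamma$-primitive genus two splitting to the middle surface $\Sigma_{mid}$ is not available to you: the uniqueness statement (Theorem \ref{uniqueHS}) is proved only for sufficiently large $m$, while the statement you are proving is claimed for all $m\in\mathbb{Z}$; worse, within the paper's logical structure the proof of uniqueness relies on Theorem \ref{not weakly prim}, whose proof relies on Theorem \ref{noprimitive} itself, so invoking uniqueness here is circular. Second, even granting the normalization, the heart of your argument --- adapting the $D_{punct}$/$D_{p/q}$ outermost-arc analysis of Theorem \ref{primitive} to the twisted knot and extracting an arithmetic relation of the form $|ps-rq|=1$ --- is only announced, not carried out. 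After the $\frac{1}{5m-2}$-surgery on $C$ the compression body is no longer $(T\times[-1,1])-N(K)$, there is no obvious analogue of the $(p,q)$-disk, and Theorem \ref{primitive} (which equates $\gamma$-primitivity with $\mu$-primitivity only for the middle surface of an honest torus knot) does not transfer to $T(7,17,10m-4)$; so the final step ``$\gamma$-primitive $\Rightarrow$ $\mu$-primitive $\Rightarrow$ contradiction with \cite{MSY}'' rests on a bridge you have not built.

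For contrast, the paper sidesteps all of this geometry with a surgery argument: since $K_m$ is tunnel number one, a $\gamma$-primitive pair would make the genus two splitting of the $\gamma$-filled manifold reducible, so $\gamma$-surgery on $K_m$ would yield a manifold of Heegaard genus at most one. By \cite{CGLS}, $\gamma\neq\mu$ cannot yield $S^3$, so one only has to exclude lens space surgeries, and this is done by combining the Ozsvath--Szabo constraint \cite{OS} (all Alexander polynomial coefficients of a lens space knot are $\pm1$ in the prescribed form) with Morton's theorem \cite{Mt}, which for $(p,q)=(7,17)$, $s=5\equiv 7^{-1}\bmod 17$ forces the coefficient of $t^{37}$ in $\Delta_{K_m}$ to be at most $-2$. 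That argument is uniform in $m$ and uses no normalization of the splitting, which is exactly what your approach is missing; to salvage your route you would need an independent (non-circular, all-$m$) proof that any $\gamma$-primitive splitting is the middle one, plus the full analysis of the annulus--disk pair in the surgered picture, both of which are substantial open steps in your sketch.
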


\vskip10pt

\begin{proof}  By Morimoto, Sakuma and Yokota (see ~\cite{MSY}) the knots $ K_m = T(7,17, 10m - 4)$ 
are not  $\mu$-primitive. Assume that $\gamma$ is a simple closed curve on $\partial E(K)$ which is not a meridian. Since the knots $ K_m $ are tunnel number one knots their Heegaard genus $g(S^3 - N(K_m)) = 2$.  Hence,  if $K_m$ are  $\gamma$-primitive then $\gamma$-surgery on $K_m$  will give genus two manifolds  $E(K_m)(\gamma)$ with reducible genus two Heegaard splittings.  That is  $g(E(K_m)(\gamma)) \leq 1$. It follows from  ~\cite{CGLS} that since  $\gamma \neq \mu$ we cannot obtain $S^3$ by $\gamma$-surgery, hence we need  only consider the case that  $g(E(K_m)(\gamma)) = 1$. Therefore in order to prove that  $K_m$ are not  $\gamma$-primitive we need to  show that we cannot obtain lens spaces by surgery on $K_m$. 

\vskip10pt

Consider the following theorem of Ozsvath and Szabo (see ~\cite{OS}): 

\vskip10pt

\begin{theorem} 
\label{nolensspace}

If $K \subset S^3$ is a knot which admits surgery yielding a lens space, then the Alexander polynomial 
$\Delta_{K} (t) $  of $K$ has the form:

$$ \Delta_{K} (t) = (-1)^{k} + \sum_{j=1}^k  (-1)^{l-j}(t^{n_j} + t^{-n_j })$$ 

\noindent where $0 < n_1 < n_2 < ...,< n_k $ is some increasing sequence of positive integers. 

\end{theorem}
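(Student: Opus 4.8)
The plan is to derive the constraint on $\Delta_K(t)$ from Heegaard Floer homology, exploiting the fact that a lens space is Floer-theoretically as simple as possible. The first observation I would record is that every lens space $L$ is an \emph{L-space}: the hat-flavored Heegaard Floer homology splits along spin$^c$ structures as $\widehat{HF}(L)\cong\bigoplus_{\mathfrak{s}}\widehat{HF}(L,\mathfrak{s})$ with each summand $\cong\mathbb{Z}$, so that $\mathrm{rk}\,\widehat{HF}(L)=|H_1(L;\mathbb{Z})|$. Consequently, if some surgery on $K$ produces a lens space, that surgered manifold is an L-space.

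The second ingredient is knot Floer homology $\widehat{HFK}(K,j)$, the bigraded invariant whose graded Euler characteristic recovers the Alexander polynomial, $\sum_j \chi\big(\widehat{HFK}(K,j)\big)\,t^j \doteq \Delta_K(t)$. The link between the two theories is the surgery formula of Ozsv\'ath--Szab\'o, which computes the Heegaard Floer homology of any surgery on $K$ from the full knot Floer complex $CFK^\infty(K)$ as the homology of a mapping cone assembled from the subquotient complexes $\widehat{A}_s = C\{\max(i,\,j-s)=0\}$. A standard consequence is that if even a single surgery on $K$ is an L-space, then $\mathrm{rk}\,H_*(\widehat{A}_s)=1$ for every $s$; in the large-integer case this is just the identification $\widehat{HF}(S^3_N(K),[s])\cong H_*(\widehat{A}_s)$. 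Applying this to the lens-space (hence L-space) surgery gives $\mathrm{rk}\,H_*(\widehat{A}_s)=1$ for all $s$.

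The heart of the argument is to convert this family of rank-one conditions into the stated polynomial form. I would analyze how $\widehat{A}_s$ interpolates between $\widehat{HF}(S^3)\cong\mathbb{Z}$ as $s\to\pm\infty$, using the vertical and horizontal differentials of $CFK^\infty(K)$; the requirement that every intermediate homology also have rank one forces $CFK^\infty(K)$ into a ``staircase'' shape. This in turn forces each $\widehat{HFK}(K,j)$ to be $0$ or $\mathbb{Z}$ and pins down its Maslov grading from the neighbouring nonzero Alexander gradings, so that the nonzero coefficients of $\Delta_K$ are all $\pm1$ and alternate in sign along the increasing sequence of exponents. Combining this with the symmetry $\widehat{HFK}(K,j)\cong\widehat{HFK}(K,-j)$ and the normalization $\Delta_K(1)=\pm1$ produces exactly the symmetric alternating expression in the statement, with $0<n_1<\cdots<n_k$ the positive Alexander gradings carrying nonzero homology.

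The main obstacle is precisely this last extraction: turning the rank-one statements about the homologies $H_*(\widehat{A}_s)$ into control over the Maslov gradings of $\widehat{HFK}(K,j)$, and hence into the alternating $\pm1$ pattern. It rests on a careful bookkeeping of the differentials in the knot Floer complex together with the grading shifts relating spin$^c$ structures on $S^3_N(K)$ to Alexander gradings, and for a surgery of rational slope one additionally needs the mapping-cone (rational surgery) formula rather than the large-surgery shortcut. This is the substantive content supplied by the Ozsv\'ath--Szab\'o machinery, which I would invoke at this step rather than reconstruct.
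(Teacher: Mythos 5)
You should note first that the paper does not prove this statement at all: Theorem \ref{nolensspace} is quoted verbatim as a theorem of Ozsv\'ath and Szab\'o (the reference [OS] in the bibliography) and is used purely as a black box in the proof of Theorem \ref{noprimitive}, so there is no in-paper argument to compare yours against. What you have written is, in outline, the strategy of the cited Ozsv\'ath--Szab\'o paper itself: lens spaces are L-spaces, the surgery formula identifies $\widehat{HF}$ of a (large) surgery with the homologies of the hook complexes $\widehat{A}_s$, the resulting rank-one conditions force $CFK^\infty(K)$ into a thin/staircase shape, and this pins the nonzero coefficients of $\Delta_K(t)$ to be $\pm 1$ and alternating, with the symmetry and $\Delta_K(1)=\pm1$ giving the stated normal form. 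That is the correct route to the result.

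As a standalone proof, however, your write-up stops exactly where the content begins: you state yourself that the conversion of the rank-one statements on $H_*(\widehat{A}_s)$ into control of the Maslov gradings of $\widehat{HFK}(K,j)$ (and hence the alternating $\pm1$ pattern) is ``invoked rather than reconstructed.'' That step --- the induction over Alexander gradings using the vertical/horizontal differentials and the grading identifications between spin$^c$ structures of the surgery and Alexander gradings --- is the entire theorem; omitting it leaves your proposal an annotated citation, which is the same logical status the statement already has in the paper. One further small point: the appeal to the rational-surgery mapping cone is unnecessary here. By the cyclic surgery theorem of Culler--Gordon--Luecke--Shalen (already cited in this paper as [CGLS]), a lens space surgery slope on a knot that is not a torus knot is integral, and for torus knots the Alexander polynomial $\Delta_{K(p,q)}(t)=\frac{(t^{pq}-1)(t-1)}{(t^p-1)(t^q-1)}$ visibly has the stated alternating form; so the argument can be run entirely with large integral surgeries, as in the original reference.
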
 

The following theorem regarding the Alexander polynomials of twist knots $T(p,q,2n)$
denoted by $\Delta_{T(p,q,2n)}(t)$, were proved by H. Morton in ~\cite{Mt}:

\vskip 10pt

\begin{theorem}
\label{theorem1}
Suppose that $0 \less s \less \frac{q}{3}$ and $s \equiv p^{-1}$mod q. Then:

\begin{enumerate}

\item For all $n \geq 2$ the coefficient of $t^{ps+2}$ in $\Delta_{T (p,q,2n)}(t)$ is $\leq -2$.

\item For all $n \geq 2$ the coefficient of at least one of the terms $t^{ps+1}$, $t^{ps+2}$, $t^{ps+3}$
in    $t^{2n}\Delta_{T (p,q,-2n)}(t)$  is $\pm 2$.

\end{enumerate} 
\end{theorem}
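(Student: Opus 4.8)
The plan is to compute $\Delta_{T(p,q,2n)}(t)$ explicitly as a function of $n$ by realizing these knots as closures of a one-parameter family of braids and applying the reduced Burau representation. Writing $\delta=\sigma_1\sigma_2\cdots\sigma_{p-1}\in B_p$, the torus knot $T(p,q)$ is the closure $\widehat{\delta^{\,q}}$, and adding $n$ full twists on two adjacent strands replaces $\delta^{\,q}$ by $\beta_n=\delta^{\,q}\sigma_1^{\,2n}$ (negative twists give $\delta^{\,q}\sigma_1^{\,-2n}$). Letting $\bar\beta$ denote the reduced Burau matrix, I would use the classical identity, valid up to a unit $\pm t^{k}$,
$$\Delta_{\widehat{\beta_n}}(t)\cdot\frac{t^p-1}{t-1}\;\doteq\;\det\!\big(\bar\delta^{\,q}\,\bar\sigma_1^{\,2n}-I_{p-1}\big),$$
so that the whole problem is reduced to understanding the $n$-dependence of a single determinant.

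The key structural observation is that this dependence is extremely simple. With the standard normalization $\bar\sigma_1=\left(\begin{smallmatrix}-t&0\\1&1\end{smallmatrix}\right)\oplus I_{p-3}$ one computes directly
$$\bar\sigma_1^{\,2n}=\begin{pmatrix} t^{2n} & 0\\[2pt] \dfrac{1-t^{2n}}{1+t} & 1\end{pmatrix}\oplus I_{p-3}=P+t^{2n}Q,$$
where $P,Q$ are fixed matrices and $Q$ has rank one. Since $\det(A+t^{2n}B)=\det A+t^{2n}\,\mathrm{tr}(\mathrm{adj}(A)B)$ whenever $B$ has rank one, it follows that for all $n\ge 1$,
$$\Delta_{T(p,q,2n)}(t)\;\doteq\;\frac{f(t)+t^{2n}g(t)}{1+t+\cdots+t^{p-1}},$$
with $f,g\in\mathbb{Z}[t^{\pm1}]$ \emph{independent of} $n$. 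The analogous computation with $\bar\sigma_1^{\,-2n}$ (eigenvalues $-t^{-1},1$) gives $t^{2n}\Delta_{T(p,q,-2n)}(t)\doteq\big(t^{2n}\tilde f(t)+\tilde g(t)\big)/(1+\cdots+t^{p-1})$. The point of the hypotheses $0<s<q/3$ and $n\ge 2$ is then a degree count: the term $t^{2n}g$ (respectively $t^{2n}\tilde f$) contributes only to coefficients of degree growing with $n$, so for $n\ge 2$ the coefficients of $t^{ps+2}$ in $\Delta_{T(p,q,2n)}$, and of $t^{ps+1},t^{ps+2},t^{ps+3}$ in $t^{2n}\Delta_{T(p,q,-2n)}$, are governed entirely by the fixed polynomials $f/(1+\cdots+t^{p-1})$ and $\tilde g/(1+\cdots+t^{p-1})$, hence are integers independent of $n$.

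It then remains to identify these fixed low-degree contributions and to read off the asserted coefficient. I would pin down $f$ and $g$ either by expanding the rank-one determinant correction explicitly, or by comparison with the multivariable Alexander polynomial of the link $T(p,q)\cup C$ via the Torres conditions; the dominant piece of $f$ is the torus-knot polynomial $\frac{(t^{pq}-1)(t-1)}{(t^p-1)(t^q-1)}$, whose coefficients all lie in $\{0,\pm1\}$, perturbed by a bounded cabling correction coming from $Q$. \textbf{The main obstacle is the final coefficient extraction.} One must locate precisely which monomials of the expanded quotient land in degree $ps+2$ and show that two (or more) of the $\pm1$ contributions reinforce rather than cancel, producing a coefficient $\le -2$; this is exactly where the arithmetic condition $s\equiv p^{-1}\pmod q$ enters, since it forces the exponent $ps$ to sit at the resonance where the torus-knot term and the twisting correction overlap. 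The three-term window $t^{ps+1},t^{ps+2},t^{ps+3}$ in part (2) reflects the mild shift in this overlap location caused by the $t^{2n}$-normalization and the residue of $pq$, which I would control by the same congruence bookkeeping, guaranteeing that at least one of the three coefficients is forced to equal $\pm2$.
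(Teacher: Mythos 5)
You should first be aware that the paper contains no proof of this statement to compare against: it is quoted as a theorem of H.~Morton (reference [Mt], \emph{The Alexander polynomial of a torus knot with twists}), and the paper merely applies it with $(p,q)=(7,17)$, $s=5$, $n=5m-2$ to exclude lens space surgeries. Your general setup is reasonable and is in the spirit of what such a proof must do: the twisted torus knot is indeed the closure of $(\sigma_1\cdots\sigma_{p-1})^{q}\sigma_1^{2n}$, the Burau identity you invoke is standard, and the matrix determinant lemma correctly shows that $\det(\bar\delta^{q}\bar\sigma_1^{2n}-I)=A(t)+t^{2n}B(t)$ with $A,B$ independent of $n$, hence $\Delta_{T(p,q,2n)}(t)\doteq a(t)+t^{2n}b(t)$ for fixed Laurent polynomials $a,b$.

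The genuine gap is the step by which you dispose of the $n$-dependence, and it is fatal as stated. In the normalized polynomial the coefficient of $t^{ps+2}$ is the sum of a fixed contribution from $a$ and the coefficient of $t^{ps+2-2n}$ in $b$; since the theorem is asserted for all $n\ge 2$, the exponent $2n$ can be as small as $4$ while $ps+2$ is large (for $(p,q,s)=(7,17,5)$ it is $37$), so the twisting term does land in the window $t^{ps+1},t^{ps+2},t^{ps+3}$ and its contribution shifts with $n$. The hypothesis $0<s<q/3$ does not prevent this, and the example displayed in the paper confirms it: for $n=3$ the coefficients of size $2$ and $3$ occur throughout the middle degrees ($t^{37}$--$t^{65}$), far above $2n=6$, so they arise from the interaction of both pieces rather than from $a$ alone. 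Indeed, if the relevant coefficients were independent of $n$ as you claim, part (2) would not need a three-term window with value $\pm2$; that window exists precisely because the overlap location moves with $n$. Consequently the heart of the theorem --- computing $a$ and $b$ (your $f,g$) explicitly, pinning down the unit normalization $\pm t^{k}$ hidden in $\doteq$ (which itself can depend on $n$), and showing for every $n\ge2$ that the combined coefficient at $t^{ps+2}$ is $\le-2$, using $s\equiv p^{-1}\pmod{q}$ --- is exactly the part you defer as ``the main obstacle.'' As it stands your proposal is a plausible framework for Morton's computation, but it does not prove either assertion, and its one substantive shortcut (the degree count) is false.
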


\vskip 10pt

Since in our case $n = 5m-2$  we can apply the theorem for all $m \neq 0$ and conclude that
if $\{p,q\} = \{7,17\}$ then for $s = 5$ we have $ 5 \equiv 7^{-1}$ mod 17 hence the coefficient of $t^{37}$ must be at most $-2$ which violates Theorem \ref{nolensspace}.

 Though it is not in a symmetric form we can see that there are  coefficients which
 are different from $+1 , -1$ . Hence by the above Theorem \ref{nolensspace} we cannot 
 obtain a lens space by surgery and as a result $K_m$ cannot be $\gamma$-primitive.
 
\end{proof}

\vskip 10pt 

 \begin{example}\rm
 
Using Hugh Morton's  program for computing, the Alexander polynomial for 
$K_1  = T(7,17,6)$  we have:  $\Delta_{K_1}(T) $ = 

\vskip10pt
 
$  T^{102}    - T^{101}   + T^{95}   - T^{94}  + T^{88}  - T^{87}  + T^{85}  - T^{84}   
+ T^{81}  - T^{80}   + T^{78}   - T^{77}   + T^{74}   - T^{73}  + T^{71}  - T^{70}   + T^{68}  
 - T^{67}  + T^{66}  - 2 T^{65}   + 3 T^{64}   - 3 T^{63}   + 2 T^{62}   - T^{60} + T^{59}   
 - 2 T^{58}   + 3 T^{57}   - 3 T^{56}  + 2 T^{55}  - T^{53}  + T^{52}   - T^{51}  + T^{50}   
 - T^{49}  + 2 T^{47}  - 3 T^{46}  + 3 T^{45}  - 2 T^{44}   + T^{43}  - T^{42}  + 2 T^{40}
 - 3 T^{39}   + 3 T^{38}  - 2 T^{37}  + T^{36}  - T^{35}   + T^{34}  - T^{32}  + T^{31}   
 - T^{29}   + T^{28}  - T^{25}  + T^{24}  - T^{22}   + T^{21}   - T^{18}   + T^{17}   - T^{15}   
 + T^{14}  - T^{8} + T^{7}  - T + 1$
 
 \vskip 7pt
 
 In particular, the coefficient of $t^{37}$ is $-2$.
 
 \end{example}

\subsection{ The knots $T(7,17, 10m - 4)$ are not  weakly $\gamma$-primitive} 
 
 \vskip13pt 
 
 \begin{theorem} 
 \label{not weakly prim}
 The knots $K_m = T(7,17, 10m - 4)$ are not weakly $\gamma$-primitive for
 any simple curve $\gamma$ on $\partial (S^3 - N(K_m))$ and any $m \in \mathbb {Z}$.
 
 \end{theorem}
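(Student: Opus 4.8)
The plan is to follow the strategy of Theorem~\ref{noprimitive}: assume weak primitivity, Dehn fill along $\gamma$, and analyze the genus two splitting of the filled manifold. The one new phenomenon is that weak primitivity produces a \emph{weakly} reducible splitting rather than a stabilized one. So suppose $K_m$ is weakly $\gamma$-primitive, witnessed on a minimal genus (hence genus two) splitting $(V,W)$ of $E(K_m)$ by a vertical annulus $A\subset V$ with $\partial A=\gamma\cup\beta$, where $\gamma\subset\partial E(K_m)=\partial_-V$ and $\beta\subset\Sigma=\partial_+V$, together with an essential disk $D\subset W$ satisfying $A\cap D=\emptyset$. First I would Dehn fill $E(K_m)$ along $\gamma$ to obtain a closed manifold $N=E(K_m)(\gamma)$. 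Filling the torus $\partial_-V$ converts $V$ into a genus two handlebody $V(\gamma)$, and capping $A$ off with the meridian disk of the filling solid torus yields a disk $D'\subset V(\gamma)$ with $\partial D'=\beta$. Since $\partial E(K_m)$ is incompressible, $\beta$ is essential on $\Sigma$, so $D'$ is essential; and $A\cap D=\emptyset$ gives $\partial D'\cap\partial D=\emptyset$. Hence $(V(\gamma),W)$ is a weakly reducible genus two Heegaard splitting of $N$.

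The key step is that a weakly reducible genus two splitting of a closed manifold is in fact reducible. Untelescoping it (Casson--Gordon) produces a generalized Heegaard splitting with incompressible thin level $F$; amalgamating back gives $2=g(\Sigma)=g(\Sigma_1)+g(\Sigma_2)-g(F)$ with $g(\Sigma_i)>g(F)$ for the adjacent thick surfaces $\Sigma_i$. A thin torus ($g(F)=1$) would force $g(\Sigma_1)+g(\Sigma_2)=3$ with each summand $\geq 2$, which is impossible, so $F$ must be a sphere. Thus either the sphere is essential and $N$ is reducible, or it is inessential, the splitting stabilizes, and $g(N)\leq 1$. In the reducible case the reducing sphere splits the genus two splitting into two nontrivial genus one summands, so $N=N_1\#N_2$ with each $N_i$ a lens space or $S^2\times S^1$; in the stabilized case $N$ is $S^3$, a lens space, or $S^2\times S^1$.

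I would then rule out every outcome. If $N$, or one of its summands, is $S^2\times S^1$, then $b_1(N)>0$, which forces $\gamma$ to be the $0$-slope; Gabai's Property R then contradicts the nontriviality of $K_m$. For $\gamma\neq\mu$, a nontrivial surgery on the nontrivial knot $K_m$ cannot yield $S^3$ (Gordon--Luecke), so the remaining possibilities are that $N$ is a lens space or a connected sum of lens spaces. Both are L-spaces, since the rank of $\widehat{HF}$ and the order of $H_1$ are multiplicative under connected sum. The Ozsv\'ath--Szab\'o obstruction of Theorem~\ref{nolensspace}, in its general L-space form, then forces $\Delta_{K_m}(t)$ to have the alternating $\pm1$ coefficient shape; but the coefficient of $t^{37}$ is $\leq -2$ by Theorem~\ref{theorem1}, exactly as in the proof of Theorem~\ref{noprimitive}, a contradiction. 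The case $\gamma=\mu$ must be handled separately, because then the filling is trivial, $N=S^3$, and the surgery obstruction does not apply; here I would use that $K_m$ is the core of the filling solid torus, met exactly once by $D'$, together with the fact that the genus two splitting of $S^3$ is stabilized, to extract a genus one ($(1,1)$) decomposition of $(S^3,K_m)$, contradicting the theorem of Morimoto, Sakuma and Yokota that $K_m$ admits no such decomposition.

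The main obstacle is exactly the feature that separates this statement from Theorem~\ref{noprimitive}: weak (as opposed to full) primitivity only guarantees a weakly reducible filled splitting, so one must both justify that in genus two this is still reducible and then dispose of the genuinely new reducible outcome $N=N_1\#N_2$. Passing from the lens-space obstruction of Theorem~\ref{nolensspace} to its L-space version, which covers connected sums of lens spaces, is the cleanest route; and the meridional slope $\gamma=\mu$ is the one place where the Alexander polynomial argument fails and the separate destabilization argument above is required.
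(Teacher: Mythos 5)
Your route is genuinely different from the paper's. The paper never leaves the knot exterior: it compresses the Heegaard surface $\Sigma$ along $D$ and surgers along the vertical annulus $A$, uses hyperbolicity of $K_m$ (Proposition \ref{hyperbolic}) to force the resulting annulus (and torus, if any) to be boundary parallel, and then analyzes whether the reattached $1$-handle lies inside or outside that peripheral annulus, concluding that either $K_m$ would be $\gamma$-primitive (contradicting Theorem \ref{noprimitive}) or $\Sigma$ was not a Heegaard surface. You instead Dehn fill along $\gamma$ and work in the closed manifold. For non-meridional $\gamma$ your scheme is essentially sound: capping the vertical annulus with a meridian disk of the filling torus does give a weakly reducible genus two splitting of $E(K_m)(\gamma)$; weakly reducible genus two splittings of closed manifolds are reducible (your untelescoping count is the standard argument); and the outcomes are then killed by \cite{CGLS}, Gabai, and the L-space form of Theorem \ref{nolensspace} combined with Theorem \ref{theorem1}. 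Two small caveats there: to exclude $(S^2\times S^1)\# L$ you need irreducibility of $0$-surgery on a nontrivial knot (Gabai), not just Property R; and passing from a non-integral L-space (or lens space) surgery slope to the Alexander polynomial constraint needs a word, though the paper's own proof of Theorem \ref{noprimitive} is equally casual about this, so I do not count it against you.

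The genuine gap is the case $\gamma=\mu$, which you acknowledge but do not actually handle. After $\mu$-filling you obtain $S^3$ with $K_m$ a core of the filled handlebody $V(\mu)$, an essential disk $D'$ meeting that core once, and an essential disk $D\subset W$ disjoint from $D'$. This data is nothing more than a restatement of weak $\mu$-primitivity: $V(\mu)=N(K_m\cup t)$ and a disk meeting the core once exist for every tunnel number one knot, and the fact that every genus two splitting of $S^3$ is stabilized says nothing about where a destabilizing pair sits relative to $K_m$. To contradict \cite{MSY} you need a disk in $W$ meeting $\partial D'$ in a single point, i.e.\ you need to promote weak $\mu$-primitivity to $\mu$-primitivity, and that promotion is precisely the nontrivial content of the theorem; it does not fall out of ``the splitting of $S^3$ is stabilized.'' The paper's proof supplies exactly this step, uniformly in $\gamma$, via the boundary-parallelism and handle inside/outside analysis made possible by hyperbolicity; absent an argument of that kind for the meridional slope, your proof establishes only that $K_m$ is not weakly $\gamma$-primitive for $\gamma\neq\mu$.
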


\vskip10pt

\begin{proof} Assume in contradiction that the genus two Heegaard splitting of $S^3 - (K_m)$ 
is weakly $\gamma$-primitive. Let $(A,D)$ denote the weak annulus disk pair. Compress the 
Heegaard surface $\Sigma$ along the disk $D$ and  perform surgery along the annulus $A$. 
We obtain either a torus  $T$  and an annulus $A$ or a  single  annulus $A$, depending on 
whether  the boundary of the disk separates  $\Sigma$ or not. Since the knots  $K_m$ are 
all hyperbolic their exteriors cannot contain essential annuli and tori. Hence both  $A$ and 
$T$ (if it exists) are boundary parallel. So  on the annulus $A$ we see either  one  or two 
scars from the compression along the disk. The original surface $\Sigma$ is obtained from $A$ 
by attaching a $1$-handle to $A$ either on the ``inside" or ``outside'' of $A$. If the   $1$-handle 
is on the ``outside" then there is a compressing disk $\Delta$ for the annulus $A$ on the ``inside"
which meets a meridian in a single point.  Hence $\Delta$ less a collar is a compressing disk for 
$\Sigma$ and this would imply that $K_m$ is $\gamma$-primitive in contradiction. If  the $1$-handle is on the ``inside" then $\Delta$ is a compressing disk  for the annulus $A$ meeting a meridian in a single point unless it intersects the $1$-handle in an essential way. That is  the $1$-handle is knotted. But this would mean that $\Sigma$ is not a Heegaard surface  (see Fig.  5).

\end{proof}

\vskip5pt

\begin{figure}
{\epsfxsize = 4.5 in
\centerline{\epsfbox{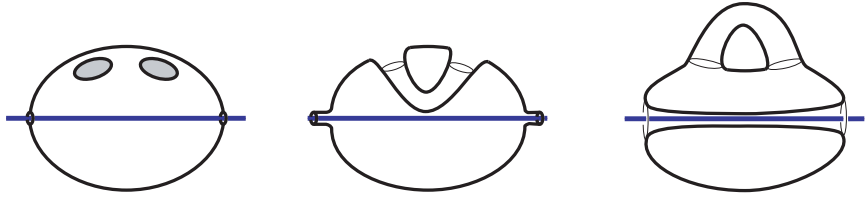}}}

\caption{A peripheral annulus with disk scars, handle inside/tube
outside, and handle outside/tube inside.}

\end{figure}

\vskip5pt

\subsection {Uniqueness of minimal genus Heegaard splittings}\label{uniquesubsection}\hfill

\vskip 10pt

Given a twisted torus knot $T(p,q,r)$  consider the link  $L(p,q) = T(p,q) \cup C$ , where $C$ is an unknotted simple closed curve encircling two strands of $T(p,q)$ as above.  The reader can check that the curve $C$ is isotopic into the middle  Heegaard surface and that the slope that $\Sigma_{mid}$ determines on $\partial N(C)$ is the $0$ slope. Hence $\frac 1 s$ - Dehn filling along $C$ is a Dehn twist on $\Sigma_{mid}$ and thus the Heegaard surface survives the twisting. On the other hand it seems on first glance that $C$ is not isotopic into the other two Heegaard surfaces for  $T(7,17)$.  Hence one might expect that the two other genus two Heegaard splittings for $E(T(p,q))$ would be ``destroyed" by the  $\frac 1 s$ - Dehn filling along $C$. In fact we prove an even stronger result:

\vskip10pt
\begin{figure}
{\epsfxsize = 4.5 in \centerline{\epsfbox{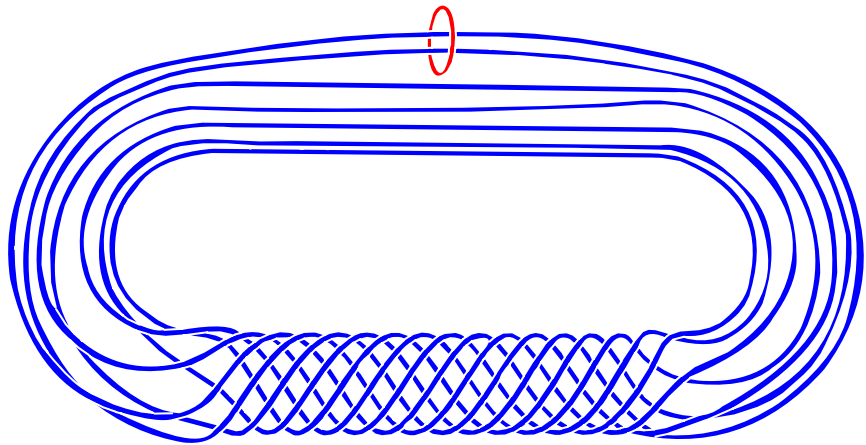}}}
\caption{The Link L(p,q)}
\end{figure}

\vskip5pt

 \begin{theorem} \label{uniqueHS} Let  $ K_m = T(p,q,r)$ be a twisted torus knot with $(p,q) = (7,17) $ and $r = 10m - 4, m \in \mathbb{Z}$. Then for sufficiently large $m \in \mathbb{Z}$ the  knot complement 
 $S^3 - N(K_m)$ has a unique, up to isotopy, genus two  Heegaard splitting. 

\end{theorem}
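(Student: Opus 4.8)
The plan is to prove uniqueness of the genus two Heegaard splitting for $E(K_m)$ by exploiting the Dehn-surgery relationship to the torus knot $T(7,17)$ together with the structural restrictions established in the preceding theorems. First I would recall that $E(T(7,17))$ has at most three genus two Heegaard surfaces: $\Sigma_{mid}$, $\Sigma_i$, and $\Sigma_o$. As noted in the remark preceding this subsection, the twisting curve $C$ lies in $\Sigma_{mid}$ with slope $0$, so $\frac{1}{s}$-Dehn filling along $C$ is realized as a Dehn twist of $\Sigma_{mid}$; hence $\Sigma_{mid}$ survives the surgery and gives a genus two splitting of $E(K_m)$. This establishes existence. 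The heart of the matter is therefore to show that \emph{every} genus two splitting of $E(K_m)$ is isotopic to this one.

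The main structural tool I would invoke is a result of the kind that classifies genus two splittings of a Dehn-filled manifold in terms of those of the filled link exterior together with splittings that are ``thin'' along the filling solid torus---that is, any minimal genus splitting of $E(K_m)$ should, for sufficiently large $m$, be isotopic to a splitting that either comes from one of the three splittings of $E(T(7,17))$ surviving the surgery, or is disjoint in an appropriate sense from $C$. This is where the hypothesis ``for sufficiently large $m$'' does its work: large surgery coefficient pushes the geometry so that an arbitrary incompressible or strongly irreducible surface must be isotoped off the surgery region, reducing the classification on $E(K_m)$ to the three candidates on the torus knot side. The next step is then to eliminate $\Sigma_i$ and $\Sigma_o$ as possibilities for $E(K_m)$, and this is precisely where Theorem \ref{noprimitive} and Theorem \ref{not weakly prim} enter: both $\Sigma_i$ and $\Sigma_o$ are $\mu$-primitive for the torus knot by Theorem \ref{primitive}, and a $\gamma$-primitive or weakly $\gamma$-primitive structure would persist in a form that contradicts the established non-$\gamma$-primitivity and non-weak-$\gamma$-primitivity of $K_m$. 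Consequently neither surface can survive as a genus two splitting of $E(K_m)$, leaving $\Sigma_{mid}$ as the unique candidate.

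To assemble the argument I would proceed as follows. First, fix a genus two Heegaard splitting $(V,W)$ of $E(K_m)$ and view $E(K_m)$ as obtained from $E(L(p,q)) = S^3 - N(T(7,17) \cup C)$ by $\frac{1}{5m-2}$-filling along $C$. Second, isotope the Heegaard surface to intersect the filling solid torus nicely and apply the large-surgery classification to conclude that $(V,W)$ descends to a genus two splitting structure on the link exterior, hence to one of $\Sigma_{mid}$, $\Sigma_i$, $\Sigma_o$. Third, rule out $\Sigma_i$ and $\Sigma_o$ using the primitivity obstructions: if the splitting were isotopic to the image of $\Sigma_i$ or $\Sigma_o$, then because those surfaces are $\mu$-primitive for the underlying torus knot, the resulting splitting of $E(K_m)$ would be $\gamma$-primitive (or at least weakly $\gamma$-primitive) for some curve $\gamma$, contradicting Theorems \ref{noprimitive} and \ref{not weakly prim}. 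Fourth, conclude that $(V,W)$ is isotopic to $\Sigma_{mid}$, giving uniqueness.

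The hard part will be the second step: making rigorous the claim that, for large $m$, an arbitrary genus two splitting of the filled manifold can be isotoped to respect the surgery solid torus and thereby be recognized as the descent of a splitting of $E(L(p,q))$. This requires controlling how a strongly irreducible or reducible Heegaard surface meets $N(C)$, presumably via a thin-position or sweep-out argument (in the spirit of Rubinstein--Scharlemann graphic techniques) that forces the intersection with the filling torus to be trivial once the surgery slope is long enough. A secondary subtlety is ensuring that the ``descent'' is well-defined up to isotopy and that the three torus-knot splittings are genuinely the only images that can arise; handling the case distinctions about whether $C$ is isotopic into $\Sigma_i$ or $\Sigma_o$, and translating survival of a splitting into a primitivity statement, must be done carefully so that the contradiction with the earlier theorems is clean. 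Once that geometric reduction is in place, the elimination of the inner and outer surfaces is essentially immediate from the primitivity theory already developed.
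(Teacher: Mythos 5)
Your skeleton points in roughly the right direction (use the surgery description $E(K_m)=E(L)(\tfrac{1}{5m-2})$ with $L=T(7,17)\cup C$, use largeness of $m$ to control how an arbitrary genus two surface meets the surgery region, and use the non-primitivity theorems as obstructions), but the pivotal second step is a genuine gap, and it is stated in a form that the available machinery does not deliver. What the paper actually uses is that the \emph{link} $L(7,17)$ is hyperbolic (this itself requires proof: the atoroidality and an-annularity lemmas), so that by Moriah--Rubinstein, for slopes outside a ``simple set'' --- this is exactly where ``sufficiently large $m$'' enters --- the surgery core $C$ can be isotoped to lie \emph{on} the Heegaard surface $\Sigma$. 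By Rieck--Sedgwick one then knows only that either $C$ is a core of one of the two compression bodies, or $\Sigma - N(C)$ is essential with boundary slope forced to be $0$. Neither alternative says that $\Sigma$ ``descends'' to one of the three genus two splittings of the torus knot exterior: $\Sigma_i$ and $\Sigma_o$ are splittings of $E(T(7,17))$, not of the link exterior ($C$ is not isotopic into them), and the surgery producing $E(K_m)$ is along a curve in the interior, not a boundary filling of $E(T(7,17))$. So your reduction to the three candidates $\Sigma_{mid},\Sigma_i,\Sigma_o$, followed by elimination of $\Sigma_i,\Sigma_o$ via their $\mu$-primitivity, does not get off the ground; in fact the paper's Theorem on the middle surface (your Theorem \ref{primitive}) is explicitly not used in the uniqueness proof.

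The real work, which your proposal does not contain, is the analysis of the remaining configurations once $C\subset\Sigma$. If $C$ is a core of the compression body containing $\partial N(K_m)$, one shows directly (a lemma about tunnel number one links with an unknotted component) that the induced splitting would be $\mu$-primitive, contradicting Morimoto--Sakuma--Yokota. In the other cases one takes the twice-punctured disk (pair of pants) $P$ bounded by $C$ and the punctures of $K_m$, arranges an annulus $A_C$ of slope $0$ and essential intersections $P\cap\Sigma$, and runs an intersection-minimization argument in which concentric curves around the punctures are eliminated either by boundary-parallelism or by producing a weakly $\gamma$-primitive configuration, contradicting Theorem \ref{not weakly prim}. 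This reduces to $P\cap\Sigma=C$, and then a separate recognition lemma is needed: cutting $S^3-N(C)$ along the disk bounded by $C$ yields a $2$-tangle, and a tangle-theoretic argument (tunnel number one knots admit no Conway spheres, Wu's classification forcing a rational tangle to be an integer tangle via the cabling annulus) shows that any such $\Sigma$ is isotopic to $\Sigma_{mid}$. Without an argument of this kind --- some way to recognize the splitting as the middle one when $C$ lies on $\Sigma$ but $\Sigma$ is not a priori a link-exterior splitting --- the uniqueness claim is not established, so as written your proof has a substantive hole exactly where you flagged ``the hard part.''
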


\vskip5pt

We first need some  lemmas:

\vskip5pt

An annulus $A \subset S^3$ will be called {\it unknotted} if the core of $A$ is unknotted as a curve in $S^3$ and the linking number of the boundary curves of $A$ is $0$. If $A^1$ and $A^2$ are unknotted annuli so that $\partial A^1 = \partial A^2$ then if the torus $ T^* = A^1 \cup A^2$ is unknotted
in $S^3$ i.e., it bounds two solid tori,  then the cores of $A^i , i = 1,2$ are  a meridian curve for one 
solid torus and a longitude curve for the other. If  $ T^* = A^1 \cup A^2$ is knotted then it bounds a solid torus on one side and a knot space on the other and the cores of $A^i , i = 1,2$ are meridian curves for the solid torus.

\vskip10pt

\begin{lemma}\label{toroidal}
The link $L(7,17) = T(7,17) \cup C \subset S^3$ is atoroidal.

\end{lemma}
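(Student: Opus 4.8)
The plan is to prove that the link $L = T(7,17) \cup C$ is atoroidal by assuming the existence of an essential torus $T^*$ in the link exterior $E(L) = S^3 - N(L)$ and deriving a contradiction. First I would recall that $E(L)$ sits inside $S^3$ and that $L$ has two components. An essential torus $T^*$ in $E(L)$ is incompressible and not boundary-parallel. Such a torus separates $S^3$ into two pieces, and the strategy is to analyze how the two link components $T(7,17)$ and $C$ are distributed with respect to $T^*$, and to use the fact that the individual pieces of $L$ are unknotted or torus-knotted curves whose exteriors we understand well.

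The key structural tool I would invoke is the standard torus $V$ on which $T(7,17)$ is embedded, together with the observation (stated just before the lemma) that $C$ can be isotoped into the middle Heegaard surface and that it is an \emph{unknotted} simple closed curve encircling two adjacent strands. The main steps would be: (i) put $T^*$ in general position with respect to the Heegaard torus $V$ (equivalently with respect to the cabling/middle surface), and remove, by an innermost-disk/incompressibility argument, all intersection curves of $T^* \cap V$ that are trivial on one side; (ii) argue that if $T^*$ is disjoint from $V$ then it lies in one of the two solid tori complementary to $V$ (or in a product region), where it would have to be either compressible or boundary-parallel, contradicting essentiality; (iii) in the remaining case where $T^* \cap V$ consists of essential curves, use the fact that the torus knot $T(7,17)$ meets each such curve in a controlled way and that $C$ bounds a disk meeting $T(7,17)$ in exactly two points, to show that any torus of intersection can be isotoped off $L$, again forcing boundary-parallelism.

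The hard part will be step (iii): ruling out an essential torus that genuinely links the two components of $L$ in a nontrivial Seifert-fibered or cabling pattern. Here I expect to use hyperbolicity or atoroidality of the pieces separately. Since $T(7,17)$ is a torus knot, its exterior is Seifert fibered and its only essential tori are cabling tori of a very specific slope; and since $C$ is unknotted and bounds a meridian disk meeting $K(7,17)$ twice, any essential torus must either be a swallow-follow torus or a cabling torus for one of the components. I would then check directly, using the $2$-strand condition on $C$ and the coprimality of $7$ and $17$, that none of these candidate tori can be simultaneously incompressible and non-boundary-parallel in the full link exterior $E(L)$. The unknottedness hypothesis recalled in the paragraph preceding the lemma is exactly what closes off the swallow-follow possibilities.

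A cleaner alternative, which I would pursue if the case analysis above becomes unwieldy, is to appeal to the fact that $C$ is an unknot and that drilling out $C$ from $S^3$ gives a solid torus in which $T(7,17)$ sits as a curve meeting the meridian disk of that solid torus twice; atoroidality of $E(L)$ then reduces to showing that this two-strand tangle pattern admits no essential torus, which follows because a torus knot together with such a small linking circle forms a link whose exterior is either Seifert fibered or hyperbolic with no essential tori. I expect the genuine obstacle to be bookkeeping the intersection patterns carefully enough to guarantee that every torus of $T^* \cap V$ can be eliminated, after which the essentiality contradiction is immediate.
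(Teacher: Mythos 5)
Your overall plan (assume an essential torus, analyze its position relative to the standard torus $V$ on which $T(7,17)$ lies, and rule out the possible intersection patterns) is plausible in spirit, but the step you yourself flag as the hard one --- ruling out a torus that genuinely involves both components --- is exactly where the proposal has no real argument. The claim that ``any essential torus must either be a swallow-follow torus or a cabling torus for one of the components'' is asserted, not proven; essential tori in the exterior of a torus knot together with an extra unknotted circle need not a priori fall into such a short list, and producing that classification by bare intersection bookkeeping with $V$ is precisely the difficulty. Your ``cleaner alternative'' is circular: saying that a torus knot together with such a small linking circle ``forms a link whose exterior is either Seifert fibered or hyperbolic with no essential tori'' is essentially the statement of the lemma, and it is not a general fact about two-strand patterns --- it genuinely depends on which knot and which circle, which is why the paper restricts to $(7,17)$ and this particular $C$. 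There is also a smaller gap in step (ii): a torus disjoint from $V$ lies in a complementary solid torus that still contains arcs of $C$, so concluding it is compressible or boundary-parallel \emph{in the link exterior} needs an argument (e.g.\ via the solid torus theorem and which side contains link components), not just its position inside a solid torus.

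The paper closes the hard case by a completely different mechanism, which is worth noting because it shows what input is actually needed. It first proves irreducibility of $S^3 - N(L)$, then uses the fact (Proposition \ref{hyperbolic}, which itself rests on the Morimoto--Sakuma--Yokota non-$\mu$-primitivity result) that infinitely many $\frac{1}{5m-2}$-fillings of $C$ yield hyperbolic knots $K_m$; hence any essential torus $T$ in the link exterior must either become peripheral or compress after these fillings. The peripheral case is killed by a minimal-intersection argument with the twice-punctured disk $P$ cobounded by $C$ and two meridians of $T(7,17)$, and the compressing case is killed by Theorem 2.4.4 of \cite{CGLS}, which forces the piece containing $\partial N(C)$ to be $T^2 \times I$ or a cable space, possibilities then excluded again using $P$ and the unknottedness of $C$. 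Without some substitute for this surgery/CGLS input (or an honest classification of the essential tori you allow), your proposal does not yet constitute a proof.
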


\begin{proof} We first claim that $S^3 - N(L(7,17))$ is irreducible. Let $S \subset S^3 - N(L(7,17))$ 
be an essential $2$-sphere. The sphere $S$ does not separate $C$ from $T(7,17)$ since $C$ 
has linking number $2$ with $T(7,17)$.  If $S$ does not separate the two components then 
obviously it bounds a $3$-ball in the component which does not contain $C$ and $T(7,17)$: Doing
$\frac 1 n$-surgery on $C$ does not affect  either $S$ or the $3$-ball and we obtain the exterior of 
$K_m$ which is irreducible. Hence we have a contradiction to the existence of an essential $S$.

Assume  now that $S^3 - N(L(7,17))$ contains an essential torus $T$.  It follows from  
Proposition \ref{hyperbolic} that for infinitely many $\frac {1} {n}$-Dehn surgeries on  $C$,  $n = 5m - 2$, 
$ m \in \mathbb{Z}$,  we obtain a hyperbolic knot  $K_m = T(7,17,10m - 4)$  whose complement does not contain essential tori. Hence for those infinitely many surgeries either the torus $T$  compresses or becomes  a peripheral torus for  $K_m$.

If $T$ is peripheral in $ S^3 - N(K_m)$, for some $m \in \mathbb Z$, then the curves  $C$ and $K_m$ 
must be on the same side of $T$: Otherwise $T$  would  be peripheral to the $T(7,17)$ component 
of $L(7,17)$ and hence  would not be essential in $ S^3 - N(L(7,17))$.  Note that the curve $C$ and the knot  $T(7,17)$ cobound a twice punctured disk $P$. If we choose $P$ to intersect $T$ minimally the intersection cannot contain trivial curves on  $P$ as this would violate either the minimality of the intersection or the choice of $T$ as essential. Furthermore the intersection $P \cap T$  cannot be  
empty:  As then $T$ would be contained in the complement of a regular neighborhood 
of $K_m \cup P \cup C$. However  $N(K_m \cup P \cup C)$  is homeomorphic to  $N(K_m \cup t)$, where $t$ is the unknotting tunnel of  $K_m$. Thus the complement is a genus two handlebody which does not contain incompressible tori.  It cannot contain curves which are isotopic to $C$ on $P$  because in this case $C$ can be isotoped onto $T$ where, as $T$ is peripheral, $C$ is either a meridian  of $K_m$   or some other curve on $T$, in  which case $C$ is knotted. Both are contradictions. Hence  $P \cap T$  is a collection of curves each of which are concentric around one or the other of the two components  $\{p_1,  p_2\} = \partial P \smallsetminus C$.

Since $K_m$ and $C$ are on the same side of  $T$ the intersection must be an even number 
of concentric curves around each of $p_1$ and  $p_2$. If there are two or more concentric  curves around the same point consider an innermost such pair. They bound  annuli  $A_1$ on $T$ and  $A_2$ on $P$  such  that the interior of  $A_2$ lies ``outside"  $T$.  Since $T$ is peripheral the ``outside'' of $T$ is homeomorphic  to the exterior  $S^3 - K_m$ which is hyperbolic by Proposition \ref{hyperbolic}. Hence $A_2$ is boundary parallel and there is an isotopy reducing the intersection between P and T. This contradicts the minimality of the intersection $P \cap T$. Hence $P \cap T = \emptyset$ and this is a 
contradiction as above.

\vskip10pt
\begin{figure}
{\epsfxsize = 4.5 in \centerline{\epsfbox{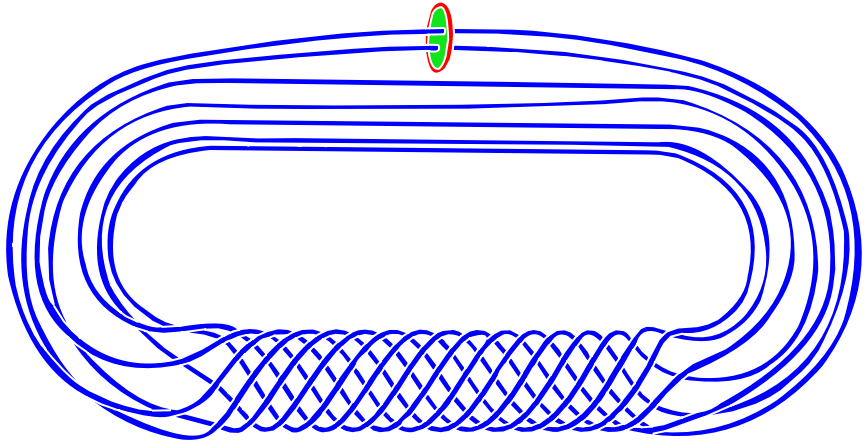}}}
\caption{The Link L(p,q) with the twice punctured disk P.}
\end{figure}

\vskip5pt

If $T$ is compressible in $S^3 - N(K_m) $ then $T$  compresses for infinitely many $\frac {1} {n}$-surgeries on $C$.  Since $S^3 - N(L(7,17))$ is irreducible then so is  $M' = S^3 - N(L(7,17)) - N(T)$.
We can now apply Theorem 2.4.4 of    ~\cite{CGLS}, which states that in this case either the 
intersection between the slopes   $\Delta(\frac {1}{10m - 4},\frac {1}{10k - 4}) \leq 1,  m \neq k$ 
which is clearly  false or that  $M_1$ the component of $M'$ which contains $\partial N(C)$ is homeomorphic to $T^2 \times I$ which is also clearly false  since then  $T$ would be peripheral to $C$. 
The last possibility is that $M_1$ is a  cable  space, i.e.,  it is homeomorphic to the complement of some  $(p,q)$-cable, $ p, q \in  \mathbb Z, g.d.c.(p,q) =1, q \geq 2$,  about the core of  a solid torus. In particular $M_1$ has two boundary components which means that  $T$ separates $C$ from $T(7,17)$.  Choose $T$ to minimize the intersection $P \cap T$. 

\vskip10pt

The intersection $P \cap T$ cannot be empty as then $T$ would not separate, and it cannot contain inessential curves on $P$ as this would violate either the minimality of the intersection or the choice of 
$T$ as essential.  

Since $P \cap T$ is minimal and $P \cap M_1 \neq \emptyset$  we can assume that $P \cap M_1$ 
is comprised only of essential annuli in $M_1$. In particular this implies that $P \cap T$ contains 
curves parallel to $C$ and hence $C$ is isotopic into $T$. Note also that $T$ must be unknotted in 
$S^3$ since it compresses to  the  $C$ side after $\frac {1}{0}$-filling on $C$ and also compresses 
to  the  $T(7,17)$ side after  $\frac {1}{0}$-filling on $T(7,17)$.  As $C$ itself is an unknot then after the isotopy onto $T$ it must be either a   $(p,1)$ or a $(1,q)$ curve with respect to $T$. That is, $C$ is 
a longitude, i.e., it meets a meridian disk for $T$ either on the $C$ side or on the $T(7,17)$
exactly once. However $M_1$ is a non-trivial cable space hence $C$ cannot meet such a meridian disk  on the $M_1$ side. Thus $C$ must meet a meridian disk for $T$ in a single point on the  $T(7,17)$ side.
However all the curves of intersection in $P \cap T$ are parallel and there is at least one (the innermost
curve) which bounds a twice punctured disk meridian disk on the $T(7,17)$ side, hence $C$ is a 
meridian there and not a longitude in contradiction. This finishes the proof and we conclude that 
 $S^3 - N(L(7,17))$ is atoroidal.

\end{proof}

\vskip10pt

\begin{lemma}\label{annular}
The link $L(7,17) = T(7,17) \cup C \subset S^3$ is an-annular.

\end{lemma}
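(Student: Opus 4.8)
The plan is to mirror the structure of the proof of Lemma~\ref{toroidal}, replacing the essential torus by an essential annulus and exploiting once more that, by Proposition~\ref{hyperbolic}, every $\frac{1}{n}$-Dehn filling on $C$ (with $n=5m-2$) produces a hyperbolic knot $K_m = T(7,17,10m-4)$, whose exterior contains no essential annulus. Suppose for contradiction that $A \subset X := S^3 - N(L(7,17))$ is an essential annulus. I would first organize the argument according to where $\partial A$ sits, distinguishing three cases: (a) both components of $\partial A$ lie on $\partial N(T(7,17))$; (b) both lie on $\partial N(C)$; and (c) one lies on each boundary torus. Throughout I would retain the twice-punctured disk $P$ cobounded by $C$ and $T(7,17)$ and isotope $A$ to meet $P$ minimally, exactly as in the previous lemma.

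In case (a) the annulus $A$ is disjoint from $\partial N(C)$, so it survives every filling and persists as an annulus in $E(K_m)$. Since $E(K_m)$ is hyperbolic for all $m$, the annulus $A$ must there either compress or become boundary parallel. If $A$ compresses for infinitely many fillings, then --- as in Lemma~\ref{toroidal} --- I would apply Theorem~2.4.4 of~\cite{CGLS} to the irreducible manifold $M' = X - N(A)$ and reach a contradiction from the resulting slope-distance and cable-space restrictions. If instead $A$ becomes boundary parallel in $E(K_m)$, the region of parallelism is a product $A\times I$ in $E(K_m)$, and the point is that in $X$ this region must contain $N(C)$, since otherwise $A$ would already be boundary parallel in $X$. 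I would then push $P$ into this region and analyze $P\cap A$ together with the position of $C$, concluding as before that $C$ would have to be a meridian of $K_m$ or else knotted, contradicting that $C$ is an unknot of linking number $2$ with $T(7,17)$.

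The remaining cases (b) and (c) involve boundary on $\partial N(C)$. In case (b) I would cap $A$ off with an annulus on $\partial N(C)$ to form a closed torus $T^*$; by Lemma~\ref{toroidal} the link is atoroidal, so $T^*$ is compressible or boundary parallel, and using the unknotted-annulus dichotomy stated before Lemma~\ref{toroidal} (together with the facts that $C$ is unknotted and links $T(7,17)$ twice) I would show that $A$ is then boundary parallel or compressible, contradicting essentiality. In case (c) the annulus runs from $\partial N(C)$ to $\partial N(T(7,17))$; comparing its slope on $\partial N(C)$ with the $0$-slope realized by $\partial P \cap \partial N(C)$, I would argue that either $A$ is isotopic into $P$ (hence inessential) or a filling whose slope matches that of $\partial A$ caps the $\partial N(C)$ end of $A$ to a disk, yielding a compressing disk for $\partial N(K_m)$ --- impossible since the hyperbolic exterior $E(K_m)$ is boundary irreducible.

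I expect the main obstacle to be the boundary-parallel subcase of (a): ruling out an annulus that is essential in the link exterior precisely because the product region it cuts off in $E(K_m)$ swallows the surgery torus. As in Lemma~\ref{toroidal}, controlling this requires a careful minimal-intersection analysis of $A$ and $P$ and a delicate use of the cable-space alternative of~\cite{CGLS}; the bookkeeping of how the boundary slopes of $A$ interact with the filling slopes $\frac{1}{5m-2}$ is where the real work lies.
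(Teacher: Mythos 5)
There is a genuine gap. Your whole strategy is to re-run the persistence-under-filling argument of Lemma \ref{toroidal} for the annulus $A$ itself, and the case you yourself flag as ``the real work'' --- an annulus with $\partial A \subset \partial N(T(7,17))$ that becomes boundary parallel in every $E(K_m)$ because the parallelism region swallows the filling torus --- is exactly the case you do not resolve; as sketched, the dichotomy ``$C$ is a meridian of $K_m$ or else knotted'' does not follow (a curve of winding number $2$ in a solid torus neighborhood of a meridian of $K_m$ can perfectly well be unknotted with linking number $2$, so unknottedness and linking number alone do not close this case). Two of your concrete steps would also fail as written: Theorem 2.4.4 of \cite{CGLS} is a statement about an essential torus compressing after fillings and is not applicable to $X - N(A)$ for an annulus whose boundary is disjoint from the filled torus (though that subcase is anyway immediate, since compressing such an $A$ in $E(K_m)$ would make the hyperbolic knot exterior boundary reducible); and in your case (c) the alternative ``either $A$ is isotopic into $P$ or some filling slope matches $\partial A$ on $\partial N(C)$'' is unjustified --- the boundary slope of $A$ need not be any $\frac{1}{5m-2}$, and nothing forces $A$ into $P$ otherwise. (A correct elementary repair of (c): such an annulus isotopes $C$, in the complement of $T(7,17)$, onto a curve on $\partial N(T(7,17))$; since $C$ is unknotted that curve must be a meridian, whence the linking number would be $\pm 1$, not $2$.)

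The paper's proof avoids all of this and never re-enters the surgery machinery for $A$. It uses only two inputs: the atoroidality already established in Lemma \ref{toroidal}, and the observation that $S^3 - N(L(7,17))$ cannot be a Seifert fibered space over an annulus with one exceptional fiber, since a $\frac{1}{5m-2}$-filling of such a space would be Seifert fibered, contradicting the hyperbolicity of $K_m$ (Proposition \ref{hyperbolic}). If $A$ joins the two boundary components, then $\partial N(T(7,17) \cup A \cup C)$ contains a new torus $T'$, which by atoroidality bounds a solid torus on its far side, exhibiting the link exterior as precisely such a Seifert fibered space --- contradiction. If both components of $\partial A$ lie on one boundary torus, then $\partial N(T(7,17) \cup A)$ contains two tori; atoroidality forces one of them to be peripheral to $C$, which produces an annulus between $C$ and $T(7,17)$ and reduces to the previous case. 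You should replace your filling-by-filling analysis of $A$ with this ``promote the annulus to tori and quote atoroidality'' argument; it is where the previously proved Lemma \ref{toroidal} actually earns its keep.
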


\vskip5pt

\begin{proof} Suppose now that $S^3 - N(L(7,17))$ contains an incompressible annulus $A$.  

As above, the manifold $S^3 - N(L(7,17))$ is not  a Seifert fibered space over an annulus with a single exceptional fiber. If it was then $\frac 1{5m - 2}$-surgery on the boundary component corresponding to $C$ would yield a Seifert fibered space in contradiction to the fact that $K_m$ is hyperbolic by Theorem \ref{hyperbolic}.  

There are two possible cases, $A$ joins either distinct components of $\partial (S^3 - N(L(7,17)))$
or the same one:

If there was an annulus $A$ in $S^3 - N(L(7,17))$ with one boundary component on $T(7,17)$ and 
the  other on $C$, then a regular neighborhood $N(T(7,17) \cup A \cup C)$ has a torus boundary component $T'$ which is different from $\partial N(C)$ or $\partial N(T(7,17))$. The torus $T'$ 
contains a solid torus $V$ on the side away from $C$ and $T(7,17)$ as $S^3 - N(L(7,17))$ is 
atoroidal by Lemma \ref{toroidal}. Hence $S^3 - N(L(7,17))$ is the solid torus $V$ glued to itself along $A$  which is  a Seifert fibered space over an annulus with a single exceptional fiber, in contradiction.

Assume that  $S^3 - N(L(7,17))$ contains an annulus $A$ with both boundary components on 
 $T(7,17)$. The boundary of $N( T(7,17) \cup A)$ contains  two  tori  $T'$ and $T''$ both 
 different from  $\partial N(T(7,17))$. As $S^3 - N(L(7,17))$ is atoroidal each of  $T'$ and $T''$ 
 either bounds a   solid torus  or is  peripheral. Neither are peripheral into  $T(7,17)$ and they 
 cannot both bound  solid tori as $C$ must be somewhere. Hence one, say $T'$, is peripheral 
 into $C$. This means  that there is an annulus between $C$ and  $T(7,17)$, in contradiction. 
 The case where both boundary components of $A$ are on $C$ is identical.

\end{proof}

\vskip5pt

\begin{corollary}\label{linkhyp}
The link $L(7,17) = T(7,17) \cup C \subset S^3$ is hyperbolic.

\end{corollary}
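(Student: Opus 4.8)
The plan is to invoke Thurston's hyperbolization theorem for Haken manifolds with toroidal boundary. Writing $M = S^3 - N(L(7,17))$, I would first collect the three structural facts already in hand: $M$ is irreducible (this was established at the start of the proof of Lemma \ref{toroidal}), $M$ is atoroidal (Lemma \ref{toroidal}), and $M$ is anannular (Lemma \ref{annular}). Since $L(7,17)$ has two components, $\partial M$ consists of the two tori $\partial N(C)$ and $\partial N(T(7,17))$, so $M$ is a compact orientable irreducible $3$-manifold with nonempty toroidal boundary, and in particular it is Haken.

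Thurston's theorem says that such an $M$ admits a complete finite-volume hyperbolic structure on its interior provided it is atoroidal, anannular, and not a Seifert fibered space (equivalently, it is not $T^2 \times I$ and not a small Seifert fibered space). The only remaining point is therefore to rule out the Seifert fibered possibility, and I would argue this directly from anannularity. A compact orientable Seifert fibered space whose base orbifold has nonempty boundary contains an essential vertical annulus, lying over a properly embedded essential arc in the base, unless the base is a disk with at most one cone point, in which case the total space is a solid torus. Since $M$ has two torus boundary components it is not a solid torus, so it cannot be Seifert fibered without contradicting Lemma \ref{annular}. This also disposes of $T^2 \times I$, which carries the essential annulus $c \times I$ for any essential curve $c \subset T^2$. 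Indeed, the impossibility of the one delicate fibered case, a Seifert fibering over an annulus with a single exceptional fiber, was already observed in the proof of Lemma \ref{annular}.

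With the Seifert fibered case eliminated, Thurston's theorem applies and $M$ is hyperbolic, which is the assertion of the corollary. I expect the only point requiring genuine care to be the clean statement and application of the hyperbolization theorem, together with the elementary verification that an anannular Seifert fibered space with toroidal boundary must be a solid torus. Both are standard, so this corollary is essentially a bookkeeping consequence of Lemmas \ref{toroidal} and \ref{annular}: the real content lies in those two lemmas, and the hyperbolicity of $L(7,17)$ follows formally.
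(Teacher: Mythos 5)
Your proposal is correct and follows exactly the paper's route: the paper's proof is the one-line observation that $S^3 - N(L(7,17))$ is irreducible (from the proof of Lemma \ref{toroidal}), atoroidal (Lemma \ref{toroidal}), and anannular (Lemma \ref{annular}), so Thurston's hyperbolization theorem applies. Your additional care in explicitly ruling out the Seifert fibered case via anannularity is a correct fleshing-out of a step the paper leaves implicit, not a different argument.
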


\begin{proof}  Since $S^3 - N(L(7,17))$ is irreducible, atoroidal and an-annular it follows from Thurstons
hyperbolization theorem that it is hyperbolic.

\end{proof}

\vskip10pt

\begin{lemma}\label{unknot} Suppose $L = K \cup C$ is a two component tunnel number one link in $S^3$  with unkntting tunnel $\tau$. Assume that $C$ is the unknot in $S^3$. Then the Heegaard splitting  of $S^3 - N(K)$ induced by $\tau$ is $\mu$-primitive. 

\end{lemma}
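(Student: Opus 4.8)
The plan is to produce explicitly the annulus--disk pair demanded by the definition of $\mu$-primitivity (Definition~\ref{primitive}). Write $V' = N(K \cup \tau \cup C)$ for the genus two handlebody determined by $\tau$; by the tunnel number one hypothesis $W = S^3 - \operatorname{int} V'$ is also a genus two handlebody, and the splitting of $E(K) = S^3 - N(K)$ induced by $\tau$ is $(V,W)$ with $V = V' - \operatorname{int} N(K)$ a compression body, $\partial_- V = \partial N(K)$, and $\Sigma = \partial_+ V = \partial W$. For the annulus I take the vertical annulus $A = \mu \times I$ in the collar $\partial N(K) \times I \subset V$, where $\mu$ is a meridian of $K$; pushing $\mu$ straight out to $\Sigma$ (away from the $1$-handle on $\tau \cup C$) gives $\partial A = \mu \cup \beta$, with $\beta \subset \Sigma$ isotopic to the meridian $\mu_K$ of the $K$-handle of $V'$, i.e.\ to $\partial P_K$ where $P_K$ is the cocore disk of that handle. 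Everything then reduces to exhibiting an essential disk $D \subset W$ meeting $\beta$ in a single point, i.e.\ to showing that $\beta$ is primitive in the handlebody $W$.

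The key step is a handle--cancellation computation that isolates where the hypothesis on $C$ enters. The disk $P_K$ is a nonseparating meridian disk of $V'$, so cutting $V'$ along $P_K$ yields a solid torus whose core is isotopic to $C$; that is, the cut manifold is $N(C)$. Regluing gives $S^3 = V' \cup W = (V'\text{ cut along }P_K) \cup (N(P_K) \cup W)$, and $N(P_K)\cup W$ is exactly $W$ with a $2$-handle $h$ attached along $\beta = \partial P_K$. Hence
\[
W \cup_\beta h \;=\; S^3 - \operatorname{int} N(C).
\]
Now I invoke the hypothesis: since $C$ is the unknot, $S^3 - \operatorname{int} N(C)$ is a solid torus, i.e.\ a genus one handlebody. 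So attaching a $2$-handle to $W$ along $\beta$ drops the genus from two to one and keeps the result a handlebody.

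It remains to pass from this reduction back to the geometric statement that $\beta$ meets a meridian disk of $W$ once. First, $\beta$ cannot bound a disk in $W$: otherwise $W \cup_\beta h$ would contain a nonseparating or essential $2$-sphere and could not be the irreducible solid torus $S^3 - N(C)$; in particular $\beta$ is essential on $\Sigma$. Then, taking a meridian disk of the solid torus $W \cup_\beta h$ and putting it in general position with the cocore of $h$, an innermost-curve/outermost-arc reduction isotopes it to meet that cocore in exactly one point; its intersection with $W$ is the desired essential disk $D$ with $|D \cap \beta| = 1$. This equivalence---$\beta$ primitive in $W$ if and only if a $2$-handle along $\beta$ reduces $W$ to a lower-genus handlebody---is the only nonroutine point and is the main obstacle to pin down carefully; it is a standard fact about handlebodies, and the computation above shows that it is precisely the unknottedness of $C$ that forces $W \cup_\beta h$ to be a handlebody. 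Assembling $A$ and $D$ yields the $(A,D)$ reducing pair, so $(V,W)$ is $\mu$-primitive.
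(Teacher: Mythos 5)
Your setup and your handle--cancellation computation are correct, and they are in essence the same decomposition the paper uses: with $V'=N(K\cup\tau\cup C)$, cutting $V'$ along the meridian disk $P_K$ leaves a regular neighborhood of $C$, so $W$ with a $2$-handle $h$ attached along $\beta=\partial P_K$ is $S^3-\operatorname{int}N(C)$, a solid torus precisely because $C$ is unknotted; and your vertical annulus is exactly the paper's annulus $P_K-N(K)$. The gap is at the step you yourself flag as the crux, namely the implication ``$W\cup_\beta h$ a lower--genus handlebody $\Rightarrow \beta$ primitive in $W$''. Your innermost--curve/outermost--arc sketch does not prove it: the cocore of $h$ is an arc, so a meridian disk of the solid torus meets it in points and there are no intersection curves or arcs to reduce; if instead you intersect the meridian disk with the attaching annulus $N(\beta)$ (or the core disk of $h$), the standard moves only remove inessential components and give no control forcing \emph{exactly one} essential intersection at the end. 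Nothing at that level rules out ending with none (an essential disk of $W$ disjoint from $\beta$ that happens to be a meridian disk of the solid torus) or with two or more spanning arcs, and there is no homological constraint available, since $H_1(\text{solid torus},\partial)=0$ makes the algebraic count zero in any case. Also note that if the disk crossed $h$ in a single core--parallel disk, its intersection with $W$ would be an annulus, not the disk you want; the correct terminal configuration is a single spanning arc in $N(\beta)$, and getting there is the whole content of the statement. The equivalence you need is true, but it is a genuine theorem (Gordon, \emph{On primitive sets of loops in the boundary of a handlebody}, Topology Appl.\ 27 (1987)); cited, it completes your argument.

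For comparison, the paper reaches the same conclusion without that theorem by reading the identical decomposition from the other side: $(V'-N(C))\cup_\Sigma W$ is a genus two Heegaard splitting of the solid torus $S^3-\operatorname{int}N(C)$, and such splittings are standard, hence stabilized (the paper cites ~\cite{CG}). A stabilizing pair $(D,D')$ has $|\partial D\cap\partial D'|=1$, so $D$ is non-separating, and since the non-separating essential disk of the compression body $(\partial N(C)\times I)\cup\tau\cup N(K)$ is unique up to isotopy, $D$ is the meridian disk $P_K$; then $D-N(K)$ together with $D'$ is the required annulus--disk pair. So either route works: your reduction plus Gordon's theorem, or the paper's appeal to standardness of splittings of the solid torus plus uniqueness of the non-separating disk. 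What you cannot do is replace either of these classification-type inputs by the general-position reduction as written.
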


\vskip10pt

\begin{proof}Since $C$ is unknotted  then $C \cup \tau$ is a tunnel for $K$.  Furthermore, the complement of  $N(K \cup \tau)$ is a genus  two handlebody inside the solid torus  $V = S^3 -N(C)$ so 
$(K \cup \tau)$  defines a genus two  Heegaard splitting  for $V$. This Heegaard splitting is standard 
by Casson-Gordon (see ~\cite{CG}).  The tunnel system $K \cup \tau$ is a genus two splitting of the
solid torus $S^3 - N(C)$, hence it is stabilized.   Thus the genus two compression body  
$(\partial C \times I) \cup \tau \cup K$ contains a non-separating disk $D$ which meets a disk $D'$ for the complementary handlebody in $S^3 - N(C)$ once.   But, $D$ must be the cocore of $K$, because it is the unique non-separating  disk in the compression body  $(\partial C \times I) \cup \tau \cup K$.   Together the meridional annulus $D- N(K)$ and $D'$ demonstrate that $K$ is $\mu$-primitive.

\end{proof}

\vskip0pt

We are now ready to prove the theorem:

\vskip10pt

\begin{proof} {\it (of Theorem \ref{uniqueHS})}   With the link $L = T(7,17) \cup C$ there  is an 
associated set of surgeries on $C$ which yield manifolds containing Heegaard surfaces which 
are not  Heegaard splittings for the link exterior. This set is {\it simple} as in Definition 0.5 of ~\cite{MR}: 
A subset of   $\mathbb Z \oplus \mathbb Z$ is {\it very simple} if it is a union of a finite subset 
$A \subset  \mathbb Z \oplus \mathbb Z$  and a subset of the  form $\alpha + n \beta, n \in \mathbb Z$, where $\{\alpha, \beta\}$ is some basis for  $\mathbb Z \oplus \mathbb Z$. A set is {\it simple} if it is a finite union of very simple sets.  

Let $(\mu, \lambda)$ be the  ``natural'' meridian longitude pair  for  $H_1(\partial N(C))$.
Consider the ``line"  $\mathcal{L}_0$ of surgeries containing the slopes $\frac 1 {5m - 2}$ (with respect to  $(\mu, \lambda)$). It is percisely the set of slopes  that meet the curve of  slope  $0$ once. The intersection of  $\mathcal {L}_0$ with the  simple set is contained in some ball unless $\mathcal{L}_0$ coincides with one of the lines in  the simple set. In this case it is contained in a ball union 
$\mathcal{L}_0$. Choose $m_0 \in \mathbb{Z}$  bigger in absolute value then the radius of that ball. Set  $m \in \mathbb{Z}$ such that   $|m| > m_0$.

Let $\Sigma$ be a genus two Heegaard surface which separates $S^3 - N(K_m)$ into two compression bodies  $W_1$ and $W_2$ with $\partial N(K) = \partial_- W_1$. By ~\cite{MR} Theorem 0.1 since 
 $ L = T(7,17) \cup C$ is a hyperbolic link  and $K_m$ is obtained by surgery on $C$ we can assume  
that  we can isotope $C$ into  $\Sigma$. 
 
By Theorem 5.1 and Remark 5.3 of ~\cite{RS}   the curve $C$ is a core of $ W_1$ or $W_2$ or
the surface $\Sigma - N(C)$ is either incompressible or $\Sigma - N(C)$ compresses to an  essential 
surface. Since $\Sigma$ is of genus two the latter case would imply that the essential surface  is an 
annulus. This contradicts the fact that $K_m$ is hyperbolic. In the incompressible case the slope of  
$\partial (\Sigma - N(C))$ determines some line $\mathcal{L}$ in the simple set containing the slope 
$\frac 1 {5m - 2}$. But   the slope $\frac 1 {5m - 2}$ is  in the line  $\mathcal{L}_0$. Hence by the 
choice of $m$,  the lines $\mathcal{L}$ and $\mathcal{L}_0$ coincide. Thus we conclude that the slope of   $\partial (\Sigma - N(C))$ is $0$. We now have two cases:

\begin{itemize}

\item[(i)] The curve $C$ is not isotopic to a core in either  compression body, i.e., as above 
$\Sigma - N(C$) is essential and furthermore $\partial (\Sigma - N(C))$ is of slope $0$ on $C$.

\item[(ii)] The surface $\Sigma$ is a  Heegaard surface for $S^3 - L$, i.e., $C$ is a core in (a) $W_1$ or (b) $W_2$.

\end{itemize}

\vskip5pt

\noindent (I) Assume that the curve $C$ is a core in $W_1$ (i.e, Case (ii) (a)). In this case we satisfy the conditions of Lemma \ref{unknot}. We conclude that $K_m$ is $\mu$-primitive which is a contradiction. 

\vskip10pt

\noindent (II) Assume that we are in Case (i) or Case (ii)(b). The pair of pants $P$ in  $S^3 - N(K_m )$ which is bounded by the curve $C$ also has slope $0$ with respect to $C$. In these cases the following conditions  on $C, P$ and $\Sigma$ can be satisfied:

\begin{enumerate}

\item There is an embedded annulus $A_{C}$ between $\Sigma$ and $C$ meeting $\partial N(C)$ in a curve of slope $0$.

\item Every curve in the intersection $P \cap \Sigma$ is essential in both surfaces.

\end{enumerate}

If $C$ is a core in $W_2$ then the existence of $A_C$ is obvious and condition (2) is satisfied by 
Lemma 6 of ~\cite{Sh} which guarantees that a strongly irreducible Heegaard surface can be isotoped 
to meet a properly embedded incompressible surface in essential curves in both.

In Case (i) the surfaces $\Sigma - N(C)$  and $P$ are  essential so condition (2) is automatically
satisfied. If we push $C$ slightly into $W_1$ or $W_2$ we satisfy condition (1) because $\Sigma - N(C)$
has slope $0$ on  $\partial N(C)$

Choose  $P$ and $C$ to minimize the intersection with $\Sigma$ subject to satisfying conditions (1) and (2). Thus  we can assume that  $P\cap \Sigma$ is composed  of simple closed curves and no arcs. We can assume further that the intersection  $P \cap (\Sigma - N(C))$  does not contain curves isotopic to $C$. The curve $C$ is isotopic to an innermost such curve which satisfies conditions (1) and (2) 
so  that the resulting $P$ has fewer intersections  with $\Sigma - N(C)$. We deduce therefore, that we only have simple closed curves  concentric around  $p_1$ or $p_2$ the boundary components of $P$ which are not equal to $C$.

If  there are two or more such concentric  curves around $p_1$ or  $p_2$ then there is a pair 
such that together they cobound an incompressible annulus  $A \subset P$. An innermost 
such  annulus is contained in the handlebody $W_2$ and is therefore  boundary compressible. 
Let $\partial A = \{ \alpha_1, \alpha_2\}$, with $\alpha_1$ being the interior curve on $P$, it bounds a  vertical  annulus in $W_1$ whose other boundary curve is a meridian for $K_m$.  If $A$ is boundary parallel then either it can be eliminated, thus reducing the intersection, or $C$ is contained in the solid torus determined by the boundary parallelism. In this case $C$ is parallel, using condition (1), on $\Sigma$ to  $\alpha_2$ and hence to $\alpha_1$ and  thus to a meridian of $K_m$.

Assume that $A$ is not boundary parallel (the boundary compressing disk $D$ for $A$ may meet  $C$). 
Now boundary compressing $A$ in $\Sigma$ gives an essential disk for $\Sigma$ disjoint  from 
$\alpha_1$. Hence the knot $K_m$ is weakly $\gamma$-primitive. This contradicts  Theorem \ref{not weakly prim}.

We are left with the possibility that around each of $p_1$ and $p_2$ there is at most one curve of intersection of $(\Sigma - N(C)) \cap P$. Since $\Sigma$ separates and both punctures are in 
$W_1$ there must be a single curve of intersection around each puncture.

We conclude therefore that there are exactly two  curves of intersection, $\alpha_1$ around $p_1$  and 
$\alpha_2$ around $p_2$.  This implies that there is an incompressible pair of pants $P' \subset P$ 
properly embedded in $W_2 - N(C)$ so that $\partial P' = \{\alpha_1, \alpha_2, C\}$. Choose $A_C$
that minimizes the intersection  $P' \cap A_C$. There are no arcs of intersection with end points 
on $C$ as both $A_C$ and $P'$ have slope $0$. There are no inessential arcs of intersection on $P'$
with end points on $\alpha_1$ or  $ \alpha_2$. As all the arcs of intersection are inessential on $A_C$
we can cut and paste to create another $A_C$ with fewer intersections. Hence all arcs are essential 
in $P'$.  Choose an outermost arc $\beta$ in $A_C$. Boundary compress $P'$ along that outermost sub-disk in $A_C$. If the arc $\beta$ joins $\alpha_1$ to $\alpha_2$ then the intersection of $P$ and 
$\Sigma$ is reduced by one. If $\beta$ joins $\alpha_1$ to itself then boundary compressing 
will initially increase the number of curves in $P \cap \Sigma$ but create a concentric annulus 
which can be eliminated as above. This  reduces the total number of curves of intersection.  

So we conclude that there are no concentric ($\alpha$ type) curves around $p_1$ or $p_2$.
Using the annulus $A_C$ push the curve $C$ onto $\Sigma$. That is  $P \cap \Sigma = C$. 

\vskip5pt

An application of  the following lemma finishes the proof of the theorem:

\begin{lemma} \label{SismiddleIf} If $P \cap (\Sigma - N(C)) = \emptyset$  then $\Sigma$ is isotopic to the middle Heegaard splitting $\Sigma_{mid}$ of $S^3 - N(K_m)$.

\end{lemma}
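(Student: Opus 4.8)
The plan is to use the hypothesis to push all of the relevant data into the exterior of the underlying torus knot, apply the known classification of its genus two splittings, and then rule out the two ``wrong'' surfaces. First I would record what the hypothesis says about the side of $\Sigma$ on which $P$ lies. Since $P \cap (\Sigma - N(C)) = \emptyset$ and $C \subset \Sigma$ with surface slope $0$, the interior of $P$ is disjoint from $\Sigma$; as the two meridional boundary curves $p_1,p_2 = \partial P \smallsetminus C$ lie on $\partial N(K_m) = \partial_- W_1$, the whole of $P$ must lie in the compression body $W_1$. Capping $p_1,p_2$ off with meridian disks of $N(K_m)$ turns $P$ into the disk $\Delta$ that $C$ bounds in $S^3$, meeting $K_m$ in two points, and $\Delta$ still meets $\Sigma$ only in $C$.

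Next I would untwist. Because $C$ lies on $\Sigma$ with surface slope $0$, the $1/(5m-2)$-surgery that produced $K_m$ from $T(7,17)$ is a power of a Dehn twist supported in $N(C)$ along a slope-$0$ annulus of $\Sigma$, so (exactly as in the discussion preceding Theorem \ref{uniqueHS}) the surface survives the inverse twist: the reverse surgery carries $\Sigma$ to a genus two Heegaard surface $\Sigma_0$ of the torus knot exterior $S^3 - N(T(7,17))$. The curve $C$ persists on $\Sigma_0$ with surface slope $0$, and the disjointness is preserved, so the twice-punctured disk $P_0$ that $C$ now bounds with $T(7,17)$ again meets $\Sigma_0$ only in $C$. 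By the classification of genus two Heegaard splittings of torus knot exteriors (see \cite{Mo}), $\Sigma_0$ is isotopic to exactly one of $\Sigma_i$, $\Sigma_{mid}$, or $\Sigma_o$ of Definitions \ref{middle} and \ref{inandout}. The goal then becomes to show $\Sigma_0 \simeq \Sigma_{mid}$, for then twisting back by the $1/(5m-2)$-surgery returns $\Sigma$ to $\Sigma_{mid}$, as claimed.

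The heart of the matter, and the step I expect to be the main obstacle, is ruling out the inner and outer surfaces. I would argue that neither $\Sigma_i$ nor $\Sigma_o$ can carry $C$ at surface slope $0$ together with a twice-punctured disk $P_0$ meeting the surface only along $C$. One route is direct: on $\Sigma_i$ (respectively $\Sigma_o$) the complementary handlebody is built from the knot together with an arc to the core $w_i$ (respectively $w_o$), and I would show that a disk meeting $T(7,17)$ twice along a slope-$0$ boundary curve is forced to meet this core, so no disjoint $P_0$ exists. A second, more structural route uses primitivity: by Theorem \ref{primitive} both $\Sigma_i$ and $\Sigma_o$ are $\mu$-primitive, so if $\Sigma_0$ were isotopic to one of them it would carry an annulus-disk reducing pair $(A,D)$; using the slope-$0$ annulus and the disjoint $P_0$ I would isotope $(A,D)$ off $C$ so that it survives the re-twisting and exhibits $\Sigma$ as a $\gamma$-primitive splitting of $S^3 - N(K_m)$, contradicting Theorem \ref{noprimitive}. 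The delicate point in either route is the interference between $C$ (equivalently $P_0$) and the core curve $w_i$ or $w_o$ (equivalently the primitive pair): one must control the intersections of $P_0$ with the handle structure of $\Sigma_i,\Sigma_o$ and use hyperbolicity of $L(7,17)$ (Corollary \ref{linkhyp}) and of $K_m$ (Proposition \ref{hyperbolic}) to eliminate the inessential configurations. Granting this, $\Sigma_0 \simeq \Sigma_{mid}$ and the lemma follows.
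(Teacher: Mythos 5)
Your reduction to the torus knot exterior is a genuinely different route from the paper's, which never untwists: the paper cuts the solid torus $S^3 - N(C)$ along the disk $\Delta$ bounded by $C$, views $\Sigma - N(C)$ as a twice--punctured torus in the resulting $2$-tangle $(B,T)$, boundary-compresses $P$ to get a pair of pants decomposition $\{C, C_1, C_2\}$ of $\Sigma$, and then uses Scharlemann's theorem that tunnel number one knots are doubly prime \cite{Sc1}, Wu's analysis of compressible tangle spheres \cite{Wu1}, and the cabling annulus of $T(7,17)$ to show that the twice--punctured torus in $(B,T)$ is unique up to isotopy, hence equal to $\Sigma_{mid} - N(C)$. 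Your plan, by contrast, leans on the classification of genus two splittings of torus knot exteriors and then tries to exclude $\Sigma_i$ and $\Sigma_o$. As written it has two genuine gaps. First, the exclusion step, which you yourself flag as the main obstacle, is not carried out, and the ``structural'' route is suspect: if $\Sigma_0$ were isotopic to $\Sigma_i$ or $\Sigma_o$ you would get an $(A,D)$ reducing pair in the torus knot exterior, but to contradict Theorem \ref{noprimitive} you must make that pair disjoint from $C$ (so that it survives the $1/(5m-2)$ twisting), and nothing in your sketch produces that disjointness; indeed the whole point of the Morimoto--Sakuma--Yokota construction is that the primitivity of the torus knot splittings does \emph{not} survive the twist, so this disjointness is exactly where the difficulty is concentrated, not a routine ``inessential configuration'' to be cleaned up by hyperbolicity.

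Second, and independently, the final sentence ``twisting back returns $\Sigma$ to $\Sigma_{mid}$'' does not follow from $\Sigma_0 \simeq \Sigma_{mid}$ alone. The ambient isotopy in $S^3 - N(T(7,17))$ carrying $\Sigma_0$ to $\Sigma_{mid}$ may sweep across $N(C)$, so it does not induce an isotopy in $S^3 - N(K_m)$ after the filling slope is changed. What you actually need is an isotopy of the \emph{pair} $(\Sigma_0, C)$ to $(\Sigma_{mid}, C_{std})$, with matching slope-$0$ framing --- equivalently, an isotopy of the twice--punctured torus $\Sigma - N(C)$ to $\Sigma_{mid} - N(C)$ inside the link exterior, which then extends over the filled solid torus. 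Establishing that the slope-$0$ position of $C$ on $\Sigma_{mid}$ (together with the disjoint twice--punctured disk) is unique up to such an isotopy is essentially the full content of the lemma, and it is precisely what the paper's tangle argument (Claim \ref{oneP} and the uniqueness of the tubing disjoint from $\Delta$, using the two-meridian-generator obstruction from \cite{Mo}) is designed to prove. So your outline displaces, rather than resolves, the hard part of the statement.
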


\vskip10pt

\begin{proof} The unknotted curve $C \subset S^3$ bounds a disk $\Delta \subset S^3$ which
contains the pair of pants $P$ so that $C = \partial \Delta \subset \partial P$. The disk $\Delta$
is a compressing disk for the solid torus $V = S^3 - N(C)$.  If we cut $V$ along $\Delta$ we obtain 
a $2$-tangle $(B,T)$ with two marked disks $\Delta_1$ and $\Delta_2$ each containing two points corresponding to $K_m \cap \Delta$.

Let $\Sigma$ be a genus two Heegaard splitting for $S^3 - N(K_m)$. We can assume by the above discussion that the curve $C$ is contained in $\Sigma$ as a non-separating curve. Thus cutting $V$
along $\Delta$ induces a cutting of $\Sigma$ along $C$. Hence the tangle $(B,T)$ contains the twice
punctured torus $\Sigma - N(C)$ so that 
$\partial (\Sigma -N(C)) = \partial \Delta_1 \cup \partial \Delta_2$.

The pair of pants $P$ is boundary compressible in $V$, as the only non-boundary compressible 
surfaces in a compression body are vertical annuli and disks. After boundary compressing $P$
we get two vertical annuli with boundary curves $C_1$ and $C_2$ on $\Sigma$. These vertical 
annuli are contained in respective disks $\delta_1$ and $\delta_2$ in  the component of 
$B - (\Sigma -N(C))$ which contain the strings $t_1,t_2$ of $T$. Note that  $\delta_i \cap t_i$  is a 
single point for each $i = 1,2$. The set of curves $\{C,C_1,C_2\}$ determines a pair of pants decomposition $P_1,P_2$ for  $\Sigma$

\vskip10pt

\begin{claim} \label{oneP}\rm

The pair of pants $P_i, i = 1,2,$ is isotopic in $B-N(T)$ to   $\Delta_i  - N(T), i =1,2$.

\end{claim}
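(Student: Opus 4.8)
The plan is to reduce the claim to a uniqueness statement for essential pairs of pants with prescribed boundary in the tangle complement $B - N(T)$, and then to settle that uniqueness by an innermost-disk/outermost-arc reduction together with a boundary-parallelism argument.

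First I would pin down the boundaries and check that they agree. The pair of pants $P_i$ has boundary curves $\partial \Delta_i$, $C_1$ and $C_2$, while $\Delta_i - N(T)$ has outer boundary $\partial \Delta_i$ and two inner boundary curves running once around the strings $t_1, t_2$. Since each $C_j = \partial \delta_j$, where $\delta_j$ is a disk meeting $t_j$ in a single point, the curve $C_j$ is a meridian of $t_j$ on $\partial N(T)$; after a small isotopy it coincides with the corresponding inner boundary of $\Delta_i - N(T)$. Thus $P_i$ and $\Delta_i - N(T)$ are two pairs of pants with the same boundary, and it remains to isotope one onto the other.

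Next I would record the ambient properties needed for the reduction. By Corollary \ref{linkhyp} the exterior $S^3 - N(L(7,17))$ is irreducible and boundary-irreducible, and cutting the solid torus $V = S^3 - N(C)$ along its meridian disk $\Delta$ keeps the resulting tangle complement $B - N(T)$ irreducible and boundary-irreducible. The surface $\Delta_i - N(T)$ is a manifestly essential meridional pair of pants. For $P_i$ I would argue that it is incompressible and boundary-incompressible: a compressing disk in the handlebody component, or a boundary-compression, together with the vertical annulus joining $C_j$ to a meridian of $K_m$, would exhibit an $(A,D)$-reducing pair and hence show that $K_m$ is $\gamma$-primitive or weakly $\gamma$-primitive, contradicting Theorems \ref{noprimitive} and \ref{not weakly prim}.

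Finally I would arrange $P_i$ and $\Delta_i - N(T)$ to meet minimally (fixing the common boundary) and remove all interior intersections. Closed intersection curves bound disks on one side by incompressibility of both surfaces and can be surgered away using irreducibility; intersection arcs are eliminated by outermost-arc boundary-compressions using boundary-incompressibility. Once the two pants are disjoint and $\Delta_i - N(T)$ lies on $\partial(B - N(T))$, they cobound a region $R$ whose frontier is assembled from the two pants, and I would show $R$ is a product $P \times I$, giving the desired isotopy. I expect this last step -- verifying that the cobounded region is an honest product rather than a knotted or twisted region -- to be the main obstacle; it is exactly here that the irreducibility and boundary-irreducibility of $B - N(T)$ and the essentiality established above must be used in full.
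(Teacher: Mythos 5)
Your reduction to ``two essential pairs of pants with the same boundary in an irreducible, boundary-irreducible tangle complement are isotopic'' does not actually hold at that level of generality, and the step you yourself flag as the main obstacle is precisely where the content of the claim lives. After you have made $P_1\cup\delta_1\cup\delta_2$ and $\Delta_1 - N(T)$ disjoint, there is no general principle forcing them to cobound a product $P\times I$: disjoint incompressible, boundary-incompressible surfaces with the same boundary slopes need not be parallel, and here the region between them could a priori be a nontrivial tangle (this is exactly the configuration one must exclude). Incompressibility and irreducibility alone cannot see the difference between the ``straight'' pair of pants $\Delta_1 - N(T)$ and one that wraps through a nontrivial rational tangle, so your innermost-disk/outermost-arc cleanup, while fine as far as it goes, ends exactly where the real work begins, with no idea supplied for finishing it.

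The paper's argument goes a different way and uses special properties of $K_m$ that your proposal never invokes. It forms the four-punctured sphere $\widehat{S}=\Delta_1\cup(P_1\cup\delta_1\cup\delta_2)$, which splits $K_m$ as a sum of two $2$-tangles $(B_1,T_1)+(B_2,T_2)$; since $K_m$ is tunnel number one it is doubly prime (Scharlemann), so $\widehat{S}$ is not a Conway sphere and must compress, and by Wu's classification one of the tangles, say $(B_1,T_1)$, is rational. The integer-tangle case gives the desired isotopy directly; the non-integer rational case is then ruled out by an argument with the cabling annulus of the underlying torus knot $T(7,17)$, cut along $P$ into a disk $A'$, whose intersection pattern with $\widehat{S}$ produces a disk in $(B_1,T_1)$ running between the strands, contradicting non-integrality. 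Note also that the paper's proof exploits the \emph{compressibility} of $\widehat{S}$, whereas your plan tries to establish incompressibility of $P_i$; and even granting your primitivity-based incompressibility argument, nothing in your outline replaces the no-Conway-sphere input, Wu's theorem, or the cabling-annulus step, which are what actually pin down the isotopy class of $P_1$. As written, the proposal therefore has a genuine gap at its decisive step.
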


\begin{proof} From the construction it follows immediately that one of the pair of pants, say $P_2$, is isotopic in $B-N(T)$ to $P$ and in particular to $\Delta_2  - N(T)$.

Consider $\Delta_1  \cup (P_1\cup \delta_1\cup\delta_2) \subset B$. It is a four times punctured 
$2$-sphere $\widehat{S} \subset (S^3,K_m)$.  The  sphere $\widehat{S}$ decomposes $K_m$ 
into two $2$-tangles $(B_1, T_1)$ and  $(B_2, T_2)$ where  $K_m = T_1 + T_2$. Since $K_m$ 
is a tunnel number one knot it follows from  ~\cite{Sc1} that  $K_m$ is doubly prime so it does 
not contain a Conway sphere and hence  $\widehat{S}$ is compressible 
in $S^3 - N(K_m)$ and in particular it is compressible in either $(B_1, T_1)$ or  $(B_2, T_2)$. 
Note that the compressing disk must separate the strings. This implies by ~\cite{Wu1} that at least 
one of the tangles, say  $(B_1, T_1)$, is a rational tangle. If the compressing disk for $\widehat{S}$ meets  $\Delta_1$ in a single arc, i.e., $(B_1, T_1)$, is an integer  tangle in the terminology of ~\cite{Wu}, we are done as then we can use the compression disk to guide the isotopy between  $(P_1\cup \delta_1\cup\delta_2)$ and $P$ . So assume it does not, i.e., the rational tangle is not an integer tangle.

Note that  $K_m$  can be decomposed into a non-trivial sum of two $2$-tangles as above if and only
the underlying torus knot $T(7,17)$ can. Now consider the cabling annulus $A \subset S^3 - N(T(7,17))$. 
When we remove $N(C)$ from $S^3 - N(T(7,17))$ to obtain $S^3 - N(L(7,17)$ $A$ is punctured twice.
Denote this twice punctured annulus by $\widehat{A}$. The intersection $P \cap \widehat{A}$ consists 
of three arcs one an essential arc of $A$ and the other two arcs run between $\partial A$ and
 $\partial N(C)$.

When we cut the solid torus $V =  S^3 - N(C)$ along $P$,  $\widehat A$ is cut as well. The result is a disk   $A' = \widehat A - P$. Note that  $A' \subset B$.  Note also that there are sub-arcs of $\partial A'$ that are on the two strands of the tangle $T \subset B$.  

If we choose $A$ that minimizes the intersection $\widehat{S} \cap A$ then the intersection 
$\widehat{S} \cap A'$ cannot contain simple closed curves. This follows since $A$ does not 
contain simple closed curves which are homologous to a sum of meridians of $T(7,17)$ and 
inessential simple closed curves on $A \cap \widehat{S}$ also bound disks on $\widehat{S}$ 
and hence can be eliminated since $B$ is irreducible.   Thus $\widehat{S} \cap A$ is a collection 
of arcs and hence $A' \smallsetminus \widehat{S}$ is a collection of   disk components. 

As the curves  $C_1$ and $C_2$ are meridional curves and $\widehat{S}$ is embedded, it follows that 
of all the arcs in $\widehat{S} \cap A'$  exactly two arcs, one on $P_1$ and one on  $\Delta_1$,  run between the two different meridional curves on $P_1$ and $\Delta_1$ respectively.  Hence one of the above disk components, say $A''$, is contained in  $(B_1,T_1)$ and runs between the strands of $T_1$. This is a contradiction as since $T_1$  is a non-integer rational tangle $(B_1,T_1)$ cannot contain such a disk $A''$. It follows that $T_1$ is an integer tangle and $P_1$ is isotopic into $P$.

\end{proof}

\vskip10pt

We claim, however, that in fact  $P_i, i = 1,2,$ is isotopic in $B-N(T)$ to $\Delta_i - N(T), i = 1,2,$
respectively. As if say, $P_1$ is isotopic to $\Delta_2$ then the two pairs of pants $P_1$ and $P_2$ 
are parallel in the rational tangle $(B_2,T_2)$, as above, which must contain $P_2$ as a sub-disk.  Hence $\Sigma$ would be a genus two surface determining a handlebody component which contains 
two meridional curves as cores. If this surface is a Heegaard surface then the fundamental group 
of  $E(T(7,17))$ can be generated by two elements represented by meridians. This contradicts 
the classification of generating systems of these groups (see ~\cite{Mo}).

Hence there is a unique way to tube to copies of $P$ so that the resulting surface is disjoint from 
$\Delta$. Thus the construction of any twice punctured torus in $(B,T)$  is unique up to isotopy in  
$B - N(T)$ and are all isotopic to the twice punctured torus since $\Sigma_{mid} - N(C)$. Thus 
$\Sigma$ is isotopic to  $\Sigma_{mid}$ and the proof of the lemma is complete.

\end{proof}

This finishes the proof of the theorem.

\end{proof}

\vskip10pt

 In order to prove the uniqueness of  the minimal Heegaard splitting in Case (I), we used the fact that 
 $K_m = T(7,17,2, 10m - 4)$ is not $\mu$-primitive.  D. Heath and H-J. Song prove in ~\cite{HS} that the 
 pretzel knot  $P(-2,3,7)$ has four non-isotopic tunnels. It is well known that it is $\mu$-primitive.  Hence 
 the following conjecture  seems plausible:

\vskip10pt

\begin{conjecture} All knot exteriors $E(K)$, where  $K = T(p,q,2,r)$ and $K$ is not $\mu$-primitive, have a unique (minimal) genus two Heegaard splitting.

\end{conjecture}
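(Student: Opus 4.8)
The plan is to re-run the proof of Theorem~\ref{uniqueHS} for an arbitrary $K=T(p,q,2,r)$, after first noting that that argument never used the full strength of Theorem~\ref{noprimitive}: it used only that $K$ is \emph{not $\mu$-primitive}, together with hyperbolicity. Indeed, Case~(I) of that proof concludes by producing a primitive meridian through Lemma~\ref{unknot}, which contradicts non-$\mu$-primitivity directly; and the only primitivity appearing in Case~(II) is \emph{weak $\mu$-primitivity}, since the interior curve $\alpha_1$ of the offending annulus cobounds a vertical annulus in $W_1$ with a meridian of $K$, while the compressing disk lies in $W_2$ and misses $\alpha_1$. The proof of Theorem~\ref{not weakly prim}, read with $\gamma=\mu$, excludes this configuration using nothing beyond hyperbolicity of $E(K)$ and non-$\mu$-primitivity. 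So the entire scheme should transplant to general $(p,q)$ once we supply three inputs: (a) hyperbolicity of $K$; (b) hyperbolicity of the link $L(p,q)=T(p,q)\cup C$; and (c) the conclusion that $\partial(\Sigma-N(C))$ has slope $0$.

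Input (a) comes essentially for free. Every $T(p,q,2,r)$ is tunnel number one, and by the Morimoto--Sakuma classification in~\cite{MS1} a tunnel number one knot that is neither a torus knot nor hyperbolic is non-simple and therefore arises from a $(1,1)$-decomposition, hence carries a primitive meridian; as $K$ is assumed not $\mu$-primitive and is not a torus knot, it must be hyperbolic, exactly as in Proposition~\ref{hyperbolic}. For (b) I would reuse Lemmas~\ref{toroidal} and~\ref{annular} essentially verbatim: their only external ingredient was the existence of infinitely many hyperbolic surgeries $T(p,q,2,2s)$ on $C$, and these are provided by the Morton~\cite{Mt} and Ozsv\'ath--Szab\'o~\cite{OS} obstructions just as in Theorem~\ref{noprimitive}, for every $(p,q)$ admitting an integer $s$ with $0<s<q/3$ and $s\equiv p^{-1}\ (\mathrm{mod}\ q)$. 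The tangle analysis of Claim~\ref{oneP} and Lemma~\ref{SismiddleIf} used $(7,17)$ only through the facts that $K$ is tunnel number one (hence doubly prime, by~\cite{Sc1}) and that $T(p,q)$ carries an essential cabling annulus; both hold for every nontrivial torus knot, so that part transplants unchanged.

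The genuine obstacle is (c), and it is precisely the source of the ``sufficiently large $m$'' hypothesis. After isotoping $C$ into $\Sigma$, the incompressible case yields a boundary slope for $\Sigma-N(C)$ on $\partial N(C)$ which determines a line $\mathcal{L}$ in the simple set of~\cite{MR} through the surgery slope, and one forces the value $0$ only by concluding $\mathcal{L}=\mathcal{L}_0$; the abstract statement of~\cite{MR} secures this coincidence only once the surgery slope lies beyond the finite ball in which distinct lines of the simple set may cross, and for a single unrestricted $r$ the slope can fall inside that ball. The natural target is therefore to prove that the finitely many surgeries inside this ball already yield $\mu$-primitive knots, so that the hypothesis that $K$ is not $\mu$-primitive excises exactly these exceptions. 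Carrying this out seems to demand an explicit description of the simple set of $L(p,q)$ --- the very quantitative data that~\cite{MR} suppresses --- and this is where I expect the main difficulty. Failing a direct identification, one might bypass~\cite{MR} altogether and pin the slope to $0$ by a sweep-out argument playing $\Sigma$ against the twice-punctured disk $P$, exploiting that in the hyperbolic exterior $E(L(p,q))$ the essential planar surface of slope $0$ should be rigid up to isotopy.
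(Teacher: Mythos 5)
You should first be aware that the statement you were asked to prove is stated in the paper as a \emph{conjecture} (Conjecture \ref{nostabilized}'s companion in Section \ref{uniquesubsection}): the authors prove uniqueness only for $K_m=T(7,17,10m-4)$ with $m$ sufficiently large (Theorem \ref{uniqueHS}) and explicitly offer the general statement as an open problem, so there is no proof in the paper to match yours against. Judged on its own terms, your proposal is a program rather than a proof, and by your own admission the decisive step is missing: step (c). The only place the paper pins the boundary slope of $\Sigma - N(C)$ on $\partial N(C)$ to $0$ is via the simple-set finiteness of \cite{MR}, and that argument works only for surgery slopes $\frac{1}{5m-2}$ lying outside a finite (and completely inexplicit) ball; this is exactly what the hypothesis ``$m$ sufficiently large'' buys and exactly what the conjecture asks you to remove. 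Your two suggested remedies --- showing that the finitely many exceptional fillings are all $\mu$-primitive, or replacing \cite{MR} by a sweep-out/rigidity argument for the slope-$0$ planar surface --- are not carried out, and neither is obviously within reach (the first requires precisely the quantitative description of the simple set that \cite{MR} does not provide). Without this, the argument establishes nothing beyond what Theorem \ref{uniqueHS} already says.

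There is also a genuine flaw in your input (b). The paper's Lemmas \ref{toroidal} and \ref{annular} need infinitely many fillings on $C$ to be hyperbolic knots, and in the paper this hyperbolicity (Proposition \ref{hyperbolic}) is deduced from the fact that the knots $T(7,17,10m-4)$ have \emph{no primitive meridian}, which comes from the tunnel-number superadditivity result of \cite{MSY} together with the Morimoto--Sakuma classification \cite{MS1}. The Morton \cite{Mt} and Ozsv\'ath--Szab\'o \cite{OS} obstructions that you invoke rule out lens-space surgeries and hence serve the proof of Theorem \ref{noprimitive} (non-$\gamma$-primitivity for $\gamma\neq\mu$); they do not yield non-$\mu$-primitivity, let alone hyperbolicity, of the auxiliary knots $T(p,q,2,2s)$. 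Your conjecture's hypothesis gives non-$\mu$-primitivity only for the single knot $K$, not for an infinite family of fillings of $L(p,q)$, so hyperbolicity of the link $L(p,q)$ --- which underlies the isotopy of $C$ into $\Sigma$ via \cite{MR} --- is not secured by your argument for general $(p,q)$ (and Morton's hypothesis $0<s<q/3$, $s\equiv p^{-1} \bmod q$ already restricts the pairs $(p,q)$ you could treat this way). Your observations (a) and your reading of Case (II) --- that only weak $\mu$-primitivity, excluded by the proof of Theorem \ref{not weakly prim} with $\gamma=\mu$, is needed there --- are reasonable and do isolate where the full strength of Theorem \ref{noprimitive} is dispensable, but the two gaps above mean the conjecture remains unproved by this route.
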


\vskip 10pt

\section{Boundary stabilization and reducibility }\label{candidate}

\vskip10pt

In this section we show how the results above can be put together to obtain candidates for 
manifolds with a non-minimal genus, weakly reducible and non-stabilized Heegaard splittings.

It is a generally accepted rule amongst those  doing research on Heegaard splittings that Heegaard
splittings of small genus are easier to handle than those of large genus. Furthermore since we are dealing with questions of {\it reducibility}  there is an advantage to dealing with Heegaard splittings 
of manifolds with boundary. Having a boundary implies that the Heegaard splitting is composed from either one or two compression bodies.  The possibilities for disks inside compression bodies are
more restricted then those for handlebodies of the same genus, hence deciding whether a reducing pair of disks exists or not might be more tractable. 

Since we are trying to prove a negative i.e., that a Heegaard splitting is not stabilized, we are forced
into a proof by contradiction. Hence the argument can be expected to follow, more or less, the following theme:

Let $M$ be a $3$-manifold of genus $g$. Assume that $M$ has  a weakly reducible Heegaard 
splitting which is stabilized of genus $g + n,  n \geq 1$. Destabilize it to obtain an irreducible  
Heegaard splitting  and somehow obtain a contradiction. If we can find a manifold  which has 
a unique minimal Heegaard splitting and a weakly reducible Heegaard splitting of genus $g + 1$ 
we would have the additional option of getting a contradiction by showing that the surface we obtain after the destabilization cannot possibly be isotopic to the unique minimal genus Heegaard surface. 

To sum up, we are looking for preferably,  a tunnel number one knot  $K \subset S^3$ so that
$E(K)$ has a genus three weakly reducible Heegaard splitting and a unique genus two Heegaard
splitting. An obvious place to look for weakly reducible Heegaard splittings is Heegaard splittings 
which are {\it amalgamated}.

Consider now the exterior $E(K_m) $ for a knot $K_m = T(7,17,10m - 4)$.  It has a unique minimal
Heegaard splitting $(V, W)$, where $V$ is the compression body, of genus two. Boundary stabilize 
$(V,W)$ by amalgamating $(V,W)$ with the standard genus two Heegaard splitting of a collar of
$\partial_{-}V$, which is just $T^2 \times I$ (see ~\cite {ST}). This operation is defined and discussed 
in detail in ~\cite{MS}, Definitions 2.2 and 2.3. We obtain a weakly reducible genus three Heegaard splitting for  $E(K_m)$. By Theorem 4.6 of ~\cite{MS} if a Heegaard splitting of a knot exterior $E(K)$ 
is $\gamma$-primitive for any curve $\gamma \subset \partial E(K)$ then the  boundary stabilized Heegaard  splitting is a stabilization. i.e., the boundary stabilized Heegaard splitting contains a 
reducing pair of disks. However,  $K_m$ is not $\mu$-primitive by ~\cite{MSY} and not $\gamma$-primitive ($\gamma \neq \mu$) by Theorem \ref {noprimitive}. Hence the obvious ways for the boundary stabilized Heegaard splitting $(V', W')$  of  $E(K_m)$ to be stabilized fail. We state:

\vskip10pt

\begin{conjecture} \label{nostabilized}
The boundary stabilized genus three Heegard splitting $(V', W')$ of the unique
minimal genus two Heegaard splitting $(V, W)$ of $E(K_m)$, where $K_m = T(7,17,2,10m - 4)$, 
is  non-stabilized.

\end{conjecture}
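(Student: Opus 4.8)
The plan is to argue by contradiction, leveraging the uniqueness of the genus two splitting together with the non-primitivity results already established. Suppose that the boundary stabilized splitting $(V', W')$ is stabilized. Then it admits a destabilization to a genus two Heegaard splitting of $E(K_m)$, and since stabilization is well defined up to isotopy, $(V',W')$ is isotopic to the ordinary (internal) stabilization of that genus two splitting. By Theorem~\ref{uniqueHS} the destabilization must be isotopic to the middle splitting $(V,W)=\Sigma_{mid}$, so $(V', W')$ is isotopic to the internal stabilization $\Sigma_{mid}^{+}$ of $\Sigma_{mid}$. The strategy is then to show that the coincidence of the \emph{boundary} stabilization with the \emph{internal} stabilization $\Sigma_{mid}^{+}$ is incompatible with the amalgamation structure of $(V',W')$ unless $\Sigma_{mid}$ already carries a (weak) primitive structure --- which is exactly what Theorems~\ref{noprimitive} and~\ref{not weakly prim} forbid.

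To make this precise I would establish the converse of Theorem~4.6 of~\cite{MS} in the form needed here: \emph{if the boundary stabilization $(V',W')$ of $(V,W)$ is stabilized, then $(V,W)$ is weakly $\gamma$-primitive for some curve $\gamma \subset \partial E(K_m)$.} Granting this, the conjecture follows immediately, since $K_m$ is not weakly $\gamma$-primitive by Theorem~\ref{not weakly prim}, so $(V',W')$ cannot be stabilized. The mechanism for the converse is as follows. Recall that $(V',W')$ is the amalgamation of $(V,W)$ with the standard genus two splitting of the collar $\partial_-V \times I \cong T^2\times I$, and that the collar contributes a vertical annulus $A \subset V'$ with one boundary curve $\gamma$ on the outer torus $\partial_-V'$ and the other boundary curve $\beta$ on $\Sigma' = \partial_+ V'$. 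I would take a destabilizing pair $(E',D')$, $E'\subset V'$, $D'\subset W'$, with $|\partial E' \cap \partial D'| = 1$, put it into minimal position with respect to the annulus $A$ and the canceling handle of the amalgamation, and then run the destabilization so as to carry $A$ to a vertical annulus $\bar A \subset V$ joining $\gamma$ to a curve $\bar\beta \subset \Sigma_{mid}$ and $D'$ to an essential disk $\bar D \subset W$. The target of the combinatorial analysis is to show that after this normalization $\bar D$ is disjoint from $\bar\beta$, which is precisely the assertion that $(V,W)$ is weakly $\gamma$-primitive.

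The hard part is exactly this last normalization step. There is no a~priori control forcing the canceling handle of a stabilization to be positioned compatibly with the collar annulus $A$, and the destabilizing disks $E',D'$ may interact with $A$ and with the amalgamation tubes in an essential and uncontrolled way. Proving that one can always isotope the destabilizing pair into alignment with the amalgamation --- i.e.\ that a stabilized boundary stabilization must reflect a weak reducing pair on the original surface --- is the converse of~\cite{MS}~Theorem~4.6 and is not known in general; this is precisely why the statement remains a conjecture rather than a theorem.

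As an alternative, and possibly more self-contained, route I would attempt to classify \emph{all} genus three Heegaard splittings of $E(K_m)$ directly, extending the surgery-theoretic argument used for Theorem~\ref{uniqueHS}. One would isotope the curve $C$ onto an arbitrary genus three splitting $\Sigma$ via~\cite{MR}, apply the structural dichotomy of~\cite{RS} to control $C$ relative to $\Sigma$ after the $\frac{1}{5m-2}$-filling, and thereby show that the only weakly reducible genus three splittings of $E(K_m)$ are the boundary stabilization and the internal stabilization $\Sigma_{mid}^{+}$ of $\Sigma_{mid}$, and that these two are non-isotopic. This replaces the primitivity converse by a genus three analogue of the uniqueness theorem; the cost is a considerably more delicate intersection analysis than in the genus two case, since at genus three the pair-of-pants/tangle bookkeeping that drove the proof of Lemma~\ref{SismiddleIf} acquires an extra handle's worth of cases, and this added complexity is, I expect, the principal obstruction along this second route as well.
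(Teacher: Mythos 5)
This statement is Conjecture \ref{nostabilized}; the paper does \emph{not} prove it, and says explicitly (Section \ref{introduction} and Section \ref{candidate}) that there are currently no known techniques for showing that a non-minimal genus, weakly reducible Heegaard splitting is non-stabilized. What the paper actually establishes is only that the known mechanisms for producing a stabilization fail: by Theorem 4.6 of \cite{MS} the boundary stabilization of a $\gamma$-primitive splitting is stabilized, and Theorem \ref{noprimitive} together with \cite{MSY} (for $\gamma = \mu$) and Theorem \ref{not weakly prim} rule out $\gamma$-primitivity and weak $\gamma$-primitivity for every curve $\gamma \subset \partial E(K_m)$. Your opening reduction --- if $(V',W')$ were stabilized it would destabilize, by the uniqueness Theorem \ref{uniqueHS}, to the middle surface, hence be isotopic to the internal stabilization of $\Sigma_{mid}$ --- is precisely the paper's Remark \ref{otherdirection}, so up to that point you are tracking the authors' own analysis exactly.

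However, your proposal is not a proof, and to your credit you say so yourself. The pivotal step you isolate --- the converse of Theorem 4.6 of \cite{MS}, i.e.\ that a stabilized boundary stabilization forces a weakly primitive structure on the original genus two surface --- is exactly the open problem, and your normalization sketch does not close it: nothing controls how the destabilizing pair $(E',D')$ sits relative to the collar annulus $A$ and the amalgamation tube, and minimal-position arguments alone do not yield the alignment needed to push $D'$ down to a disk $\bar D \subset W$ disjoint from $\bar\beta$. Your alternative route, classifying all genus three splittings of $E(K_m)$ by extending the \cite{MR} and \cite{RS} surgery machinery, is likewise only a program; as you note, the pair-of-pants and tangle bookkeeping that drives Lemma \ref{SismiddleIf} does not currently extend past genus two. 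In short: your diagnosis of the obstruction is accurate and coincides with the paper's, but neither of your routes is carried out, so the statement remains --- both in the paper and in your write-up --- a conjecture rather than a theorem, and it would be an error to present either sketch as a proof.
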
 

\vskip10pt

\begin{remark}\rm\label{otherdirection}

If we assume in contradiction that $(V', W')$ is indeed stabilized, then the surface $\Sigma''$ 
obtained by destabilizing the Heegaard surface $\Sigma' = \partial_{+}V' = \partial_{+}W'$ is 
ambient isotopic to the Heegaard surface $\Sigma = \partial_{+}V = \partial_{+}W$. 
\end{remark}

\vskip5pt

There is an additional benefit for proving Conjecture \ref{nostabilized}:

\vskip5pt

It is a well known theorem of Casson-Gordon (see ~\cite{CG}) that if a {\it closed} irreducible orientable 
$3$-manifold has a  weakly reducible  Heegaard splitting then it is Haken. It is a natural question whether this theorem can be extended to manifolds with boundary. In ~\cite {Se} the second author  gave the first  examples of manifolds  with three or more boundary components which have weakly reducible and non-stabilized minimal genus  Heegaard splittings so that when the Heegaard surface is weakly reduced the surface obtained is non-essential. This result was improved by the authors, in  
~\cite{MS}, to manifolds with just two boundary components. It is still an open question if such an example exists for manifolds with a single boundary component.

\vskip5pt 
 
Thus, Conjecture \ref{nostabilized} would rule out the possible extension of the Casson-Gordon 
theorem ~\cite{CG} to manifolds with a single boundary component as follows:

Since $(V',W')$ is of genus three and is weakly reducible, then after weakly reducing we can obtain
either an essential $2$-sphere or an essential torus. We cannot have an essential $2$-sphere in 
a knot space as they are $K(\pi_1, 1)$'s. Since $E(K_m)$ is  hyperbolic  by Theorem \ref{hyperbolic}
any incompressible torus must be boundary parallel. This rules out the  possible extension of a 
``Casson-Gordon'' theorem to manifolds with a single boundary component.

\vskip5pt

\noindent{\bf Acknowledgments}: We would like to thank Hyun-Jong Song for pointing out that the
$P(-2,3,7)$ pretzel knot is a twisted torus knot \hfill $T(-3,5,2,\pm1)$ with four unknotting tunnels.

 \vskip5pt

   \end {document}